\newtheorem{bigthm}{Theorem}
\newtheorem{lem}{Lemma}[section]
\newtheorem*{QuasiComp}{Theorem\,[\citen{FT15},\,1.3.1]}
\newtheorem*{Bands}{Theorem\,[\citen{FT15},\,1.3.4]}
\newtheorem*{NumRes}{Theorem\,[\citen{FT15},\,1.3.8]}
\theoremstyle{remark}
\newtheorem{rem}{Remark}
\newcommand{\abs}[1]{\left|#1\right|} 
\newcommand{\id}{\operatorname{id}} 
\newcommand{\cB}{\mathcal{B}} 
\newcommand{\cC}{\mathcal{C}} 
\newcommand{\cD}{\mathcal{D}} 
\newcommand{\bC}{\mathbb{C}} 
\newcommand{\cH}{\mathcal{H}} %
\newcommand{\bT}{\mathbb{T}} 
\newcommand{\cJ}{\mathcal{J}} %
\newcommand{\bN}{\mathbb{N}} 
\newcommand{\norm}[1]{\left\|#1\right\|} 
\newcommand{\bR}{\mathbb{R}} 
\newcommand{\bZ}{\mathbb{Z}} 
\newcommand{\cS}{\mathcal{S}} %
\newcommand{\cO}{\operatorname{O}} 
\newcommand{\cW}{\mathcal{W}} 
\newcommand{\supp}{\operatorname{Supp}} 
\newcommand{\htop}{{{h}_{\text{top}}}} 
\newcommand{\Vol}{\operatorname{Vol}} 
\begin{document}
\title[Renormalizable Parabolic Flows]{Parabolic Flows Renormalized by Partially Hyperbolic Maps}
\date{}

\author{Oliver Butterley}
\address{(Oliver Butterley) University of Nova Gorica -- Vipavska 13 --  Nova Gorica -- 5000 -- Slovenia}
\email{oliver.butterley@ung.si}

\author{Lucia D. Simonelli}
\address{(Lucia  D. Simonelli) Abdus Salam International Centre for Theoretical Physics -- Strada Costiera 11 -- Trieste -- 34151 -- Italy}
\email{lucia.simonelli@ictp.it}

\begin{abstract}
We consider parabolic flows on 3-dimensional manifolds which are renormalized by circle extensions of Anosov diffeormorphisms. This class of flows includes nilflows on the Heisenberg nilmanifold which are renormalized by partially hyperbolic automorphisms. 
The transfer operators associated to the renormalization maps,   
acting on anisotropic Sobolev spaces, are known to have good spectral properties (this relies on ideas which have some resemblance to representation theory but also apply to non-algebraic systems). 
The spectral information is used to describe the deviation of ergodic averages and solutions of the cohomological equation for the parabolic flow. 
\end{abstract}

\maketitle
\thispagestyle{empty}

\section{Introduction}

Parabolic dynamical systems are roughly classified as a type of intermediate system: in hyperbolic systems orbits diverge exponentially, in elliptic systems there is no, or little to no, divergence and in parabolic systems the divergence of orbits of nearby points is polynomial in time. Examples of parabolic systems include horocycle flows on surfaces of negative curvature, flat flows on surfaces of higher genus and Heisenberg nilflows (see e.g., \cite{FF03,Forni02b,FF06} respectively). One of the most interesting and fruitful features of these systems is the intimate connection they enjoy with certain hyperbolic systems; this connection is referred to as {\em renormalization}. Let \(\Psi_t : M \to M\) denote the parabolic flow defined on a manifold \(M\). The most concrete instance of renormalization,  the case considered in this paper, is when there exists a map \(F: M \to M\), with some degree of hyperbolicity, which conjugates the parabolic system in the following way,
\begin{equation}
\label{eq:RenormFlow}
F \circ \Psi_{\lambda t} = \Psi_{t} \circ F,
\end{equation}
for all \(t\in \bR\) and \(\lambda>1\) fixed. (For a description of a more general scheme for renormalization, see \cite{Forni02a}.)

Renormalization is very useful, perhaps even vital, to proving important properties of these parabolic systems. Two such properties are: (i) deviation of ergodic averages; (ii) regularity of solutions to the cohomological equation. 
Given $x \in M$, \(t>0\), 
 the ergodic integral is defined, for \(h:M\to \bR\), as
\begin{equation} \label{eq:ErgodInt}
H_{x,t}(h) = \int_{0}^{t} h \circ \Psi_{r}(x) \> dr.
\end{equation}
Understanding the deviation of ergodic averages requires studying the behaviour, for \(h: M \to \bR\), of \(H_{x,t}(h)\) as \(t\to \infty\).
Solutions to the cohomological equation are functions \(g: M\to \bR\) such that  for a given \(h: M \to \bR\), 
\begin{equation}
\label{eq:Cohom}
g\circ \Psi_t(x) - g(x) = \int_{0}^{t} h \circ \Psi_r(x) \ dr
\end{equation}
for all \(x \in M\), \(t\geq 0\). It is then of interest to determine the regularity of the transfer function, \(g\).  
(The infinitesimal version of the cohomological equation is given by \(Wg = h\), where \(W\) denotes the vector field generating the flow \(\Psi_t\).)

Notice that both the above quantities, \eqref{eq:ErgodInt} \& \eqref{eq:Cohom}, involve integrals over increasingly longer orbits. By exploiting the renormalization of the parabolic flow by a hyperbolic system, these integrals can be transformed into equivalent expressions involving integration over orbits of a fixed length. This is the elementary, yet crucial, observation that, for any \(x \in M\), \(t>0\), \(h\in \cC^0(M)\), \(k\in \bN\),
\begin{equation}
\label{eq:IntRenorm}
\begin{split}
H_{x,t}(h) 
&=  \int_0^t  h \circ F^{-k} \circ F^{k} \circ \Psi_{r}(x) \> dr\\
&=   \int_0^t h \circ F^{-k}\circ \Psi_{\lambda^{-k}r}\circ F^{k}(x)  \> dr\\
&=   \int_0^{\lambda^{-k}t} \lambda^{k}   h \circ F^{-k} \circ \Psi_{u}(x_k)   \> du 
=  H_{x_k,\lambda^{-k}t} ( \widehat{F}^kh)
\end{split}
\end{equation}
where \(x_k:= F^{k}(x)\), 
\begin{equation}
\label{eq:TransOpGen}
\hat{F} : h \mapsto \lambda h \circ F^{-1}
\end{equation}
is the transfer operator, and \(\lambda^{-1}\) denotes the contraction of \(F\) in the stable direction and is consequently the same factor which appears in the renormalization~\eqref{eq:RenormFlow} relation (in general the leading term  \(\lambda \) is replaced by the inverse of the stable Jacobian of \(F\)).

By considering renormalization on the bundle of invariant distributions over the moduli space and employing tools from representation theory and harmonic analysis, information on deviation of ergodic averages, and in some cases, results on regularity of solutions to the cohomological equation, were obtained for various different systems. This was done by Forni~\cite{Forni97,Forni02b} in the case of area-preserving flows on surfaces of higher genus, Flaminio \& Forni~\cite{FF03} in the case of the horocycle flow on hyperbolic surfaces, and again by Flaminio \& Forni \cite{FF06} in the case of Heisenberg nilflows. In the case of interval exchange transformations (closely related to area-preserving flows), Mousa, Marmi \& Yoccoz~\cite{MMY05} were able to extend results from Forni~\cite{Forni97} to a larger class of systems by considering renormalization based on Rauzy-Veech induction. 

One of the most rewarding outcomes of the study of parabolic flows via renormalization was that it allowed for an explicit description of the obstructions to solving the cohomological equation (and obstructions to faster convergence of the ergodic integral) by extending beyond the space of functions and working instead in the space of distributions \cite{Forni02a, Forni02b, FF03}. This strongly suggested that there exists a link between the distributional obstructions and the spectra of transfer operators of hyperbolic systems acting on carefully chosen anisotropic Banach spaces (for an overview of anisotropic spaces see \cite{Baladi17} and references within).

The speculated connection between the distributions which arise studying parabolic flows and the distributions which arise studying hyperbolic systems was made solid in the work of Giulietti \& Liverani \cite{GL16}. They studied flows on the torus which are renormalized (in the sense of \eqref{eq:RenormFlow} but where \(\lambda t\) is instead  a more general function of \(x\) and \(t\)) by an Anosov map.
The map in their setting need not be linear, it is merely required that the foliations are \(\cC^{1+\alpha}\).
The key idea is to study the spectrum of the transfer operator associated to the hyperbolic map on an appropriate choice of anisotropic Banach space and then use this spectral information to answer the pertinent questions for the parabolic flow.
They show that indeed the eigenvalues of this operator correspond to the deviation spectrum of the ergodic averages as well as to the obstructions to solving the cohomological equation. 
The authors propose this method as a way of extending known results to more general settings, for example, beyond the smooth class of systems that have been studied using representation theory. 

Formalizing this connection has inspired a new wave of work on renormalization. This modified renormalization technique was applied by Faure, Gou\"ezel \& Lanneau \cite{FGL18} to the case of area-preserving flows on surfaces of higher genus renormalized by pseudo Anosov maps. The authors are able to link the obstructions to topological properties of the surface.
 Adam \cite{Adam18} applied this technique to the case of the horocycle flow on a surface of variable negative curvature renormalized by a geodesic flow, thus obtaining results which were previously only obtainable for constant curvature.
 
In the present work we study parabolic flows on \(3\)-dimensional manifolds which are renormalized by partially hyperbolic maps. 
The primary examples are Heisenberg nilflows renormalized by partially hyperbolic automorphisms of the nilmanifold which we describe in Section~\ref{sec:Heisenberg}. 
The main results of this work are contained in three theorems which we will now summarise. The precise statements appear in Sections~\ref{sec:ExtraFunc}, \ref{sec:ErgAver}, \ref{sec:Cohom} respectively once the required notation has been introduced.
Section~\ref{sec:Spectrum} is devoted to the description of partially hyperbolic maps and the spectral results concerning the associated transfer operators, derived by Faure \& Tsujii~\cite{FT15}. The anisotropic Sobolev spaces used are chosen carefully so that they are both large enough to provide useful information as well as precise enough to extract explicit spectral data. As a first step we prove that
\begin{quotation}
\itshape  \vspace{.2em}
The transfer operator \(\widehat F\), considered as an operator on appropriate anisotropic spaces, admits a countable spectral decomposition (Theorem~\ref{thm:SpecInfo}, Section~\ref{sec:ExtraFunc}). 
 \vspace{.2em}
\end{quotation}
This spectral decomposition is slightly unconventional in that it involves a countable sum and also that the sum does converge but not neccesarily in the obvious operator norm. 
Section~\ref{sec:ErgAver} is devoted to the study of parabolic flows under the assumption that they are renormalized, in the sense of~\eqref{eq:RenormFlow}, by a map which satisfies the spectral properties given by the theorems proved in Sections~\ref{sec:Spectrum} \& \ref{sec:ExtraFunc}. The method of studying the behavior of ergodic averages, $H_{x,t}(h)$ relies on two key parts. First we show that the ergodic integrals are well approximated by elements of the dual of our anisotropic Banach space. Then we compose the observable over long orbits of the flow, i.e., large values of $t$. The renormalization gives us a method by which we can study modified ergodic integrals that contain the same information but are taken over shorter orbits or, equivalently, over shorter intervals of time.
\begin{quotation}
\itshape \vspace{.2em}
If a parabolic flow is renormalized by a partially hyperbolic map which satisfies the spectral conclusions of Theorem~\ref{thm:SpecInfo} then we obtain a precise description of the deviation of ergodic averages (Theorem~\ref{thm:Deviation}, Section~\ref{sec:ErgAver}). 
 \vspace{.2em}
\end{quotation}
The argument used to estimate the deviation of ergodic averages is not dependent on the exact space of distributions used in the application of this paper and is consequently applicable to any other setting where similar spectral results are available for another Banach (or indeed Hilbert) space which satisfies a modest compatibility requirement (Lemma~\ref{lem:InDual}).
\begin{quotation}
\itshape  \vspace{.2em}
If a parabolic flow is renormalized by a partially hyperbolic map which satisfies the spectral conclusions of Theorem~\ref{thm:SpecInfo}, then the solutions of the cohomological equation exist and are equal to the uniform limit of an explicit quantity (Theorem~\ref{thm:Cobound}, Section~\ref{sec:Cohom}).
 \vspace{.2em}
\end{quotation}
This precise formula for the solutions to the cohomological equation allows for the possibility to further study the regularity of the solutions. A weight of evidence suggests that the natural extension of these ideas, following the notion of Giulietti \& Liverani~\cite{GL16} to study the action of the dynamics on one forms, would consequently provide information regarding the regularity question. In a different direction, Faure \& Tsujii~\cite[\S1.4]{FT15}  proved similar spectral results in the extension to the Grassmanian which should, building o the framework established in the present work, allow for all the analogous argument to be carried out in the case of flows along stable manifolds of partially hyperbolic maps like the ones studied in the present work but without a regularity condition on the invariant foliation. 

Theorem~\ref{thm:Deviation} and Theorem~\ref{thm:Cobound}, in terms of results, are not significant improvements over what was previously shown by Flaminio \& Forni~\cite{FF06}. The contribution of this work is to rigorously describe the connection between the asymptotic properties of nilflows and the spectral properties of partially hyperbolic dynamical systems, to refine the ideas for connecting parabolic dynamics and anisotropic spaces which was introduced by Giulietti \& Liverani~\cite{GL16} and most particularly, to explore generalized methods based on representation theory that allow for the possibility to progress in various different directions. 

\begin{rem}
The argument of Faure \& Tsujii \cite{FT15} could be described as an extension of the representation theoretic argument to the non-linear (non-algebraic) setting. In particular, the Bargmann Transform plays a central role in the definition of the norm. 
\end{rem}

\begin{rem}
As far as the authors know, the method of Faure \& Tsujii is the only one currently available that is able to give information about the inner bands of the spectrum. Seeing beyond the countable number of eigenvalues in the outer band is essential, particularly when studying the cohomological equation. Consequently, we expect that the spectral results and methods in Faure \& Tsujii \cite{FT13, FT17} for contact Anosov flows could prove useful for the study of properties of the horocycle flow.
\end{rem}

\subsection*{Acknowledgements}
It is a pleasure to thank Giovanni Forni and Carlangelo Liverani for suggesting the study of this subject and for numerous useful discussions. O.B. thanks Centro di Ricerca Matematica Ennio De Giorgi for hospitality during the event ``renormalization in Dynamics'' where the project was initiated. We are grateful to Alexander Adam, Mat\'ias Delgadino, Sebastien Gou\"ezel and Davide Ravotti for several helpful discussions and comments. We thank Centre International de Rencontres Math\'ematiques for hospitality during the event ``Probabilistic Limit Theorems for Dynamical Systems''.


\section{Circle Extensions of Symplectic Anosov Maps}
\label{sec:Spectrum}
In this section we describe the setting and the relevant spectral results obtained in the work of Faure \& Tsujii~\cite{FT15}.
Let \(S\) be a \(\cC^\infty\) closed connected  \(2\)-dimensional manifold with symplectic two form \(\omega\). Let \(F:S\to S\) be a linear Anosov map that preserves \(\omega\) (i.e., \(f^*\omega = \omega\)) and suppose that there exists an \(F\)-invariant smooth decomposition of \(TS\), \(T_xS = E_u(x) \oplus E_s(x)\) and a constant \(\lambda >1\) such that
\( \left.\det DF \right|_{E_u} = \lambda\) and \( \left.\det DF \right|_{E_s} = \lambda^{-1}\) (this notation denotes the Jacobian restricted to the respective subspace). 
The one-dimensional unitary group \(U(1)\) is the multiplicative group of complex numbers of the form \(e^{i\theta}\), \(\theta \in \bR\). Let \(P\) denote the \(U(1)\)-principle bundle over \(S\) and denote by \(\pi:P \to S\) the corresponding projection map. Furthermore we assume that there exists\footnote{In this setting, under mild assumptions \cite[Asm.\,1,2]{FT15} it was shown that such a connection and lift map always exists.} a connection \(A\) (which corresponds to the symplectic form) and a map (called the lift),
\[
 \widetilde{f}: P \to P
\]
such that 
\( (\pi \circ   \widetilde{f})(p) = (F \circ \pi)(p)\), 
\(F (e^{i\theta}) = e^{i\theta} F(p)\) for all \(p\in P\), \(\theta \in \bR\)
and \(F^* A = A\) where \(A\) is a connection whose curvature is \(-2\pi i \pi^* \omega\).
The map \(  \widetilde{f} : P \to P\) is partially hyperbolic in the sense that tangent bundle splits into three subbundles, one exponentially contracting, one exponentially expanding and a third, corresponding to the fibres of the bundle \(P\), in which the behaviour is neutral. The stable and unstable bundles are not jointly integrable in a precise way since \(F\) preserves a contact form~\cite[Remark 1.2.7]{FT15}.
For example, if \(S\) is taken to be the torus with the canonical symplectic form, then \(P\) is the Heisenberg nilmanifold. Taking \(F\) to be a toral automorphism means that the lift is an automorphism of the nilmanifold. Details of this particular example are given in Section~\ref{sec:Heisenberg}.

Corresponding to the quantity we require for parabolic flows~\eqref{eq:TransOpGen}, we consider the transfer operator \(\widehat{F}: \cC^{\infty}(P) \to \cC^{\infty}(P)\) defined as\footnote{This equates to the choice of potential \(V= \ln \lambda\) in the reference \cite{FT15}.}
\begin{equation}
\label{eq:TransOp}
\widehat{F}
: h \to
\lambda \, h \circ \tilde f^{-1}.
\end{equation}
The equivariance property of \(\tilde f\) in the fibres means that there is the following natural and useful decomposition of \(\cC^{\infty}(P)\) (the \(N\)\textsuperscript{th} element of the decomposition corresponds to the \(N\)\textsuperscript{th} Fourier mode in the fibre).
For \(N \in \bZ\) let
\[
\cC_N^{\infty}(P) : = \left\{h \in \cC^{\infty}(P) \ | \ h(e^{i \theta}p)= e^{iN \theta}h(p)
, \text{for all \(p\in P\), \(\theta \in \bR\)}\right\}
\]
and let \(\widehat{F}_N : \cC_N^{\infty}(P) \to \cC_N^{\infty}(P) \) be defined as
\begin{equation}
\label{eq:NTransOp}
\widehat{F}_N  := \left. \smash{\widehat{F}} \right|_{\cC_N^{\infty}(P)}. 
\end{equation}
Throughout this section we assume the above setting. We now recall, restricted to this setting, the pertinent results proven by Faure \& Tsujii~\cite{FT15}. Let \(\cD_N'(P)\) denote the dual of \(\cC^\infty_N(P) \).

\begin{QuasiComp}
\label{thm:QuasiComp}
 For any \(N\in \bZ\) and sufficiently large \(r>1\) there exists a Hilbert space \(\cH_N^r(P)\), satisfying
 \[
  \cC^\infty_N(P)  \subset \cH_N^r(P) \subset \cD_N'(P),
 \]
 such that \(\widehat{F}_N\) extends to a bounded operator
 \(  \widehat{F}_N :  \cH_N^r(P) \to  \cH_N^r(P)\)
with spectral radius bounded above by  \(\smash{\lambda}\) and essential spectral radius bounded above by \(\smash{\lambda^{-(r-1)}}\).
Moreover \(1\) is in the spectrum of \(\widehat{F}_0\), has multiplicity \(1\) and the associated eigendistribution corresponds to the invariant measure.
\end{QuasiComp}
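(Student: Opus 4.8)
This statement is \cite[1.3.1]{FT15}; the plan is to recall the strategy of its proof. First I would set up the microlocal picture: the operator $\widehat F_{N}$ is, microlocally, a Fourier integral operator on $P$ whose canonical relation is the cotangent lift $\kappa$ of $\widetilde f$ to $T^{*}P$ (a symplectomorphism). I would decompose a covector according to the dual splitting $T^{*}P=E_{u}^{*}\oplus E_{c}^{*}\oplus E_{s}^{*}$ as $\xi=\xi_{u}+\xi_{c}+\xi_{s}$; then $\kappa$ contracts $\xi_{u}$ by $\lambda^{-1}$, dilates $\xi_{s}$ by $\lambda$, and fixes $\xi_{c}$, and the value of $\xi_{c}$ is exactly the fibre Fourier mode, so restricting to $\cC^{\infty}_{N}(P)$ amounts to freezing $\xi_{c}=N$. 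The upshot is that the \emph{trapped set} of covectors that do not escape to infinity under $\kappa^{\pm n}$ is $\{\xi_{u}=\xi_{s}=0\}$, on which only the neutral fibre rotation survives. I would stress here that, since the stable and unstable subbundles of $\widetilde f$ fail to be jointly integrable (the contact condition $\widetilde f^{*}A=A$, \cite[Remark 1.2.7]{FT15}), this trapped set is a genuine symplectic submanifold of $T^{*}P$ — the feature that forces one to argue microlocally rather than by soft functional analysis.

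Next I would build the space $\cH^{r}_{N}(P)$ by an anisotropic weighting in phase space. I would fix an escape (order) function $W_{r}$ on $T^{*}P$ equal to $\langle\xi\rangle^{-r}$ on the outgoing cone $\{\abs{\xi_{u}}\gg\abs{\xi_{s}}\}$, equal to $\langle\xi\rangle^{+r}$ on the incoming cone $\{\abs{\xi_{s}}\gg\abs{\xi_{u}}\}$, depending only on $\xi_{c}=N$ near the trapped set, and interpolating smoothly between these regimes; here $r>1$ is the parameter in the statement. Using the Bargmann (FBI) transform I would realise $W_{r}$ as an invertible pseudodifferential operator $\mathrm{Op}(W_{r})$ and set $\cH^{r}_{N}(P):=\mathrm{Op}(W_{r})^{-1}L^{2}(P)$ with Hilbert norm $\norm{u}_{\cH^{r}_{N}}:=\norm{\mathrm{Op}(W_{r})\,u}_{L^{2}}$. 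Since $W_{r}$ is sandwiched between two positive polynomial weights, Sobolev embedding gives $\cC^{\infty}_{N}(P)\subset\cH^{r}_{N}(P)\subset\cD'_{N}(P)$, and $\widehat F_{N}$ visibly preserves $\cC^{\infty}_{N}(P)$.

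Then I would conjugate the operator: by Egorov's theorem $\mathrm{Op}(W_{r})\,\widehat F_{N}\,\mathrm{Op}(W_{r})^{-1}$ is again a transfer operator for $\widetilde f$, with principal symbol $\lambda\cdot(W_{r}\circ\kappa)/W_{r}$ — the factor $\lambda$ being the chosen potential, which exactly matches the unstable Jacobian. Away from a conic neighbourhood of the trapped set $W_{r}$ strictly decreases along $\kappa$ (in the outgoing cone the covector is contracted, so $\langle\xi\rangle^{-r}$ picks up a factor at most a constant times $\lambda^{-r}$, and with the potential $\lambda$ this gives at most a constant times $\lambda^{-(r-1)}$); near the trapped set $W_{r}$ is essentially constant and the conjugated operator is, microlocally, the potential $\lambda$ times the unitary fibre rotation, of norm at most a constant times $\lambda$. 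Cutting with a microlocal partition $\chi_{0}+(1-\chi_{0})$, $\chi_{0}$ supported near the trapped set, I would obtain a Lasota--Yorke type inequality
\[
\norm{\widehat F_{N}^{\,n}u}_{\cH^{r}_{N}}\le C\lambda^{n}\norm{u}_{\cH^{r}_{N}}
\quad\text{and}\quad
\norm{\widehat F_{N}^{\,n}u}_{\cH^{r}_{N}}\le C\lambda^{-n(r-1)}\norm{u}_{\cH^{r}_{N}}+C_{n}\norm{u}_{\cH^{r'}_{N}},
\]
for some $r'<r$ with compact inclusion $\cH^{r}_{N}\hookrightarrow\cH^{r'}_{N}$; since $\lambda^{-(r-1)}<\lambda$, Hennion's theorem then yields spectral radius $\le\lambda$ and essential spectral radius $\le\lambda^{-(r-1)}$. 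The hard part will be exactly this trapped-set analysis: because the stable and unstable subbundles are non-integrable the trapped set is symplectic and the reduced dynamics on it is the non-trivial ``prequantum'' fibre rotation, so controlling the conjugated operator there — and checking that the interpolation region of $W_{r}$ is genuinely escaping — requires the full Bargmann-transform machinery; this is the technical core of \cite{FT15}.

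Finally, for $N=0$ I would note that $\widehat F_{0}h=\lambda\,h\circ F^{-1}$ involves only the base, i.e.\ it is the classical weighted transfer operator of the linear symplectic Anosov map $F$ on $S$ acting on a standard anisotropic Hilbert space. I would check by a direct computation that the $F$-invariant (Haar) measure on $S$, viewed as an element of $\cD'_{0}(S)$, is — up to the normalisation fixed by the potential — the eigendistribution singled out in the statement; then, since $F$ is volume preserving and topologically mixing, this measure is the unique equilibrium state for the potential, and a Perron--Frobenius / cone argument on the anisotropic space together with the exponential mixing of $F$ shows that the corresponding eigenvalue is simple and is the only spectrum of that modulus.
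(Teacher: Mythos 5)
The first thing to note is that the paper does not prove this statement at all: it is imported verbatim (restricted to the linear case and to the potential \(V=\ln\lambda\)) from Faure \& Tsujii \cite[Thm.\,1.3.1]{FT15}, so there is no internal proof to compare you against; your proposal can only be judged as a reconstruction of the cited argument, and as such it is essentially faithful. Faure \& Tsujii do reduce to the fixed fibre-Fourier modes \(N\), work semiclassically in the parameter \(\hbar=1/(2\pi N)\) via the Bargmann transform, define \(\cH^r_N(P)\) through the anisotropic escape-function weight \(\cW^r_N\) in phase space (the same objects recalled in Section~\ref{sec:ExtraFunc} of this paper), identify the trapped set with the zero section of the stable/unstable dual directions, and obtain quasi-compactness by splitting the operator into a small-norm part supported away from the trapped set plus a compact remainder --- which is the same mechanism as your Lasota--Yorke/Hennion formulation, even though they work directly with the weighted norm \(\norm{\cW^r_N\cdot\cB_x h}_{L^2}\) rather than literally conjugating by an invertible \(\mathrm{Op}(W_r)\). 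Two caveats are worth recording. First, the contact/non-integrability point you stress is what makes the trapped set symplectic and is responsible for the band structure and the \(N\)-uniform resolvent bounds of \cite[Thm.\,1.3.4]{FT15} (which is what the present paper actually needs later); bare quasi-compactness for a fixed \(N\), as in the statement at hand, does not require it and could also be reached by softer anisotropic-space technology, so presenting it as the obstacle to ``soft functional analysis'' overstates its role for this particular theorem. Second, mind the normalization in your last step: with the potential fixed by \eqref{eq:TransOp}, the invariant (Haar) measure is an eigendistribution of \(\widehat F_0\) with eigenvalue \(\lambda\) (it becomes \(1\) only after renormalizing the operator, which is the convention in which \cite{FT15} phrase this part, and is consistent with the remark after Theorem~\ref{thm:SpecInfo} that the relevant eigenvalue sits at \(\lambda\)); your hedge ``up to the normalisation fixed by the potential'' is the right instinct, but a complete write-up should state explicitly which normalization yields the eigenvalue \(1\) claimed in the quoted statement.
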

\noindent
The Hilbert spaces \(\cH_N^r(P)\) are called \emph{anisotropic Sobolev spaces}. It is possible to obtain much more detailed information about the spectrum in this setting. Specifically, the eigenvalues of the transfer operators occur\footnote{\begin{samepage}Graphical representation of the spectral result which is described by \cite[Theorem~1.3.4]{FT15} (smooth case).  The circles drawn are of radius \(\lambda, \lambda^{{1}/{2}}, \lambda^{-{1}/{2}}, \lambda^{-{3}/{2}}, \lambda^{-{5}/{2}},\) etc. 

\nopagebreak
\vspace{1pt}
\noindent
\begin{minipage}{\textwidth}
\hspace{1cm}
\begin{tikzpicture}[scale=0.8]           
     \draw[color=gray!30!white, thin] (0,0) circle (1.8 cm);
     \foreach \r in {1.342,0.745,0.414,0.23,0.128,0.071}{
        \draw[color=gray!30!white, thin] (0,0) circle (\r cm);}
    \draw (-1.9,0) -- (1.9,0);
    \draw (0,-1.9) -- (0,1.9);
    \fill (0:1.8cm) circle (2pt);
    \fill (0:0cm) circle (2pt);
        \path (225:2.2cm) node (v0) {\(\widehat F_{0}\)};
\end{tikzpicture} 
\hfill
\begin{tikzpicture}[scale=0.8]           
     \draw[color=gray!30!white, thin] (0,0) circle (1.8 cm);
     \foreach \r in {1.342,0.745,0.414,0.23,0.128,0.071}{
        \draw[color=gray!30!white, thin] (0,0) circle (\r cm);
        \fill (20:\r cm) circle (2pt);
        \fill (28:\r cm) circle (2pt);
        \fill (200:\r cm) circle (2pt);
        \fill (260:\r cm) circle (2pt);
        \fill (100:\r cm) circle (2pt);
        \fill (110:\r cm) circle (2pt);
        \fill (175:\r cm) circle (2pt);
        \fill (80:\r cm) circle (2pt);
        \fill (310:\r cm) circle (2pt);
        \fill (350:\r cm) circle (2pt);}
    \draw (-1.9,0) -- (1.9,0);
    \draw (0,-1.9) -- (0,1.9);
    \path (225:2.2cm) node (v0) {\(\widehat F_{10}\)};
\end{tikzpicture} 
\hfill
\begin{tikzpicture}[scale=0.8]           
     \draw[color=gray!30!white, thin] (0,0) circle (1.8 cm);
     \foreach \r in {1.342,0.745,0.414,0.23,0.128,0.071}{
        \draw[color=gray!30!white, thin] (0,0) circle (\r cm);
        \fill (54:\r cm) circle (2pt);
        \fill (50:\r cm) circle (2pt);
        \fill (240:\r cm) circle (2pt);
        \fill (250:\r cm) circle (2pt);
        \fill (115:\r cm) circle (2pt);
        \fill (119:\r cm) circle (2pt);
        \fill (180:\r cm) circle (2pt);
        \fill (85:\r cm) circle (2pt);
        \fill (290:\r cm) circle (2pt);
        \fill (320:\r cm) circle (2pt);
        \fill (2:\r cm) circle (2pt);
        \fill (24:\r cm) circle (2pt);
        \fill (270:\r cm) circle (2pt);
        \fill (240:\r cm) circle (2pt);
        \fill (170:\r cm) circle (2pt);
        \fill (145:\r cm) circle (2pt);
        \fill (170:\r cm) circle (2pt);
        \fill (200:\r cm) circle (2pt);
        \fill (377:\r cm) circle (2pt);
        \fill (345:\r cm) circle (2pt);}
    \draw (-1.9,0) -- (1.9,0);
    \draw (0,-1.9) -- (0,1.9);
    \path (225:2.2cm) node (v0) {\(\widehat F_{20}\)};
\end{tikzpicture} 
\hspace{1cm}
\end{minipage} 
\vspace{1pt}
\nopagebreak
\noindent
Where exactly the eigenvalues lie on each circle is merely representative in the above image. However, in theory, it is possible to explicitly calculate them for the linear case~\cite{Faure07}. \end{samepage}
} in narrow bands centred on the circles of radius \(r_k=\lambda^{-\frac{1}{2}-k}\).

\begin{Bands}
\label{thm:Bands}
For any \(\epsilon >0\), there exists \(C_\epsilon>0\), \(N_{\epsilon} \geq 1\), such that, for any \(\abs{N} \geq N_{\epsilon}\), the spectrum of \(  \widehat{F}_N :  \cH_N^r(P) \to  \cH_N^r(P)\) is contained within the union of bands
\[
 \bigcup_{k \geq 0} \{z \in \bC \, : \, |z| \in (r_k - \epsilon, r_k+\epsilon)\}.
\]
Moreover, when \( r_k + \epsilon < \eta < r_{k-1} - \epsilon \) for some \(\eta>0\), \(k\geq 1\) then
\[
 \sup_{\abs{z}=\eta}\norm{\smash{(z - \widehat{F}_N)^{-1}}} \leq C_\epsilon.
\]
\end{Bands}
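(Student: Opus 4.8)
The plan is to follow the semiclassical (microlocal) scheme of \cite{FT15}; I sketch its structure, taking for granted the spaces $\cH^r_N(P)$ and the essential-spectral-radius bound $\lambda^{-(r-1)}$ from \cite[Theorem~1.3.1]{FT15}, so that the real content is a description of the discrete spectrum in $\{\,|z|>\lambda^{-(r-1)}\,\}$ together with the resolvent bound. The organising idea is that the fibre frequency $N$ plays the role of an inverse Planck constant $N\sim\hbar^{-1}$: the space $\cC^\infty_N(P)$ is the space of smooth sections of the $N$-th tensor power of the prequantum line bundle over $S$ attached to the connection $A$, and under this identification $\widehat F_N$ is a semiclassical Fourier integral operator quantising the symplectomorphism $F$; the space $\cH^r_N(P)$ is built by conjugating with a Bargmann (wave-packet) transform and inserting on $T^*P$ a weight that decays along the escaping directions of the lifted canonical dynamics.

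First I would isolate the trapped set $K\subset T^*P$ of that canonical lift --- the covectors whose forward and backward orbits stay bounded once the escape weight is accounted for. Since $F$ is Anosov and the fibre direction is neutral, $K$ is a smooth symplectic submanifold projecting to $S$, on which the induced dynamics is $F$ itself, the two transverse directions being the contracting and expanding cotangent directions with multipliers $\lambda^{\mp1}$. Away from $K$ the escape weight makes $\widehat F_N$ contract by $\lambda^{-(r-1)}$, so that region contributes nothing above that radius. Near $K$ one puts the canonical map into a normal form: the transverse part is the linear hyperbolic map with multipliers $\lambda^{\pm1}$, and its metaplectic (Bargmann--Fock) quantisation, acting on the anisotropic transverse space, has discrete spectrum $\{\,\lambda^{-1/2-k}:k\ge0\,\}$ --- the exponent $-\tfrac12$ a metaplectic zero-point correction (with the normalisation of \eqref{eq:TransOp}), each further transverse quantum costing another factor $\lambda^{-1}$. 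Crucially, the longitudinal factor along $K$ is the quantisation of the \emph{symplectomorphism} $F$ and is therefore unitary up to $O(N^{-1})$; hence, modulo errors tending to $0$ as $|N|\to\infty$, $\widehat F_N$ is the direct sum $\bigoplus_{k\ge0}\lambda^{-1/2-k}\,U_{N,k}$ with each $U_{N,k}$ within $O(N^{-1})$ of a unitary, which forces the spectrum, for $|N|\ge N_\epsilon$, into $\bigcup_{k\ge0}\{\,z:|z|\in(r_k-\epsilon,r_k+\epsilon)\,\}$ (taking $r$ large enough that $\lambda^{-(r-1)}<\epsilon$, so the remaining essential spectrum is absorbed by the small-$r_k$ bands).

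For the resolvent bound, fix $\eta$ with $r_k+\epsilon<\eta<r_{k-1}-\epsilon$ and split $\widehat F_N=A_{<k}+B_{\ge k}$ into the part microlocalised to the transverse modes $0,\dots,k-1$ and the remainder, so that $\norm{B_{\ge k}}\le r_k+\tfrac12\epsilon<\eta$ uniformly in $N$. On $|z|=\eta$ the operator $z-A_{<k}$ is a finite sum of terms $z-\lambda^{-1/2-j}U_{N,j}$ with $j<k$; since $|z|=\eta$ keeps a fixed distance from each circle $\{\,|w|=r_j\,\}$ while $U_{N,j}$ is $O(N^{-1})$-close to a unitary, each such term is invertible with norm bounded in terms of $\eta$ and $\epsilon$ only, so $(z-\widehat F_N)^{-1}=(z-A_{<k})^{-1}\sum_{m\ge0}\bigl(B_{\ge k}(z-A_{<k})^{-1}\bigr)^m$ converges with geometric ratio bounded away from $1$ uniformly in $N$; adding the $O(\lambda^{-(r-1)})$ contribution from away from $K$ gives $\sup_{|z|=\eta}\norm{(z-\widehat F_N)^{-1}}\le C_\epsilon$ with $C_\epsilon$ independent of $N$.

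The technical heart --- and the point where the hypothesis $|N|\ge N_\epsilon$ rather than ``all $N$'' enters --- is to make the normal form and the transverse Fock decomposition rigorous \emph{uniformly} in the semiclassical parameter: one must control the nonlinearity of $F$ and the curvature of $A$ when passing to the local model, check that the transverse operator really is $o(1)$-close, in the $\cH^r_N$-norm, to the metaplectic direct sum as $|N|\to\infty$, and --- the most delicate step --- show that the longitudinal factor differs from a genuine unitary by $O(N^{-1})$, which is exactly what makes the bands thin annuli about circles rather than full discs. Everything else (the escape-function construction, boundedness, and the essential-spectral-radius bound) is already packaged in \cite[Theorem~1.3.1]{FT15}.
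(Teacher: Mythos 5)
There is nothing in the paper to compare your argument against: the statement is quoted verbatim (restricted to the linear setting) from Faure--Tsujii \cite[Thm.~1.3.4]{FT15}, and the paper offers no proof of it, only the citation. What you have written is a recognisable reconstruction of the strategy of the cited source: $N$ as an inverse semiclassical parameter, $\cC^\infty_N(P)$ as sections of the $N$-th power of the prequantum line bundle, the Bargmann transform and escape weight defining $\cH^r_N$, localisation to the trapped set, the transverse metaplectic/Fock decomposition producing the geometric ladder of radii, approximate block structure plus a Neumann series in the spectral gaps giving the $N$-independent resolvent bound. So in terms of route, you are following the same path as the reference from which the paper imports the result.

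As a proof, however, your text is a roadmap rather than an argument: the items you yourself label the ``technical heart'' --- the uniform-in-$N$ normal form near the trapped set, the rigorous splitting of $\widehat F_N$ into transverse modes with controlled coupling, and the two-sided bounds on the diagonal blocks --- are precisely the content of the book-length proof in \cite{FT15}, so deferring them amounts to assuming the theorem. Two further points of bookkeeping. First, your claim that \emph{every} block $U_{N,k}$ is within $O(N^{-1})$ of a unitary is stronger than what \cite{FT15} establishes (asymptotic isometry is proved there only for the external band); for both the band containment and the resolvent estimate one needs only upper and lower norm bounds of order $r_k$ on the $k$-th diagonal block together with smallness of the off-diagonal coupling, and that is what the source actually supplies. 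Second, watch the normalisation: with the prefactor $\lambda$ in \eqref{eq:TransOp} (potential $V=\ln\lambda$) the transverse metaplectic moduli $\lambda^{-1/2-k}$ are multiplied by $\lambda$, so the bands sit at $\lambda^{1/2-k}$ --- consistent with the quoted outer band at radius $\lambda^{1/2}$ in Theorem\,[\citen{FT15},\,1.3.8] and with Theorem~\ref{thm:SpecInfo} --- whereas your direct sum places them at $\lambda^{-1/2-k}$; the paper's own definition of $r_k$ is loose on this factor of $\lambda$, but the two conventions should not be mixed in a single argument.
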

\noindent
Crucially, for our subsequent use, the resolvent bound included in this theorem is independent of \(N\in \bZ\).

\begin{rem}
While Theorem~\cite[1.3.4]{FT15} is presented for a larger class of systems, we state the above Theorem for the linear case, and we provide a specific application of this in Section \eqref{sec:Heisenberg}. The general results of Faure \& Tsujii \cite{FT15} provide a framework for future extensions of the results in Sections \ref{sec:ErgAver} and \ref{sec:Cohom} to a larger class of parabolic flows.
\end{rem}

\begin{rem}
 The spectral results we use, concerning partially hyperbolic maps, are available in higher dimensions, however the method we use in Section~\ref{sec:ErgAver} relies on the stable bundle being one dimensional (corresponding to the flow lines of the parabolic flow) in order to connect ergodic integrals to the norm. 
\end{rem}

\begin{rem}
 We restrict ourselves to the case where the unstable Jacobian is smooth. The more general case is possible (in general the foliations would only be  H\"older) and the analogous results are already available~\cite[\S1.4]{FT15} using the technique of  extending to the Grassmanian. The analogous method of using Grassmanian was applied by Giulietti \& Liverani~\cite{GL16} in the case of toral flows. For our application, we need the stable foliation to have some degree of regularity (since the leaves are flow lines for the parabolic flow) and a result of Hurder \& Katok~\cite{HK90} says that any smooth, volume-preserving Anosov map of \(\bT^2\) with \(\cC^{1+\omega}\) invariant foliations is smoothly conjugate to a linear Anosov toral automorphism. (The notation \(\cC^{1+\omega}\) means differentiable with the derivative being in the class given by the modulus of continuity \(\omega(s) = \operatorname{o}(s \abs{\log (s)}) \).) 
\end{rem}
 
In the setting described in this section, under the assumption that the outer band is isolated from the other bands, an estimate on the number of eigenvalues is obtained.
\begin{NumRes}
\label{thm:NumRes}
Counting multiplicity, the number of eigenvalues of \(\hat{F}_N\) on the outer band \(\{ \abs{z} \in (\lambda^{1/2}-\epsilon, \lambda^{1/2}+\epsilon)\}\) is equal\footnote{The symplectic volume of \(\cS\) is \(\Vol_\omega(\cS) = 	\int_\cS \omega  \).} to \(N \Vol_\omega(\cS) + \cO(1)\).
\(\hat{F}_N\) has  \(\approx N\) resonances in the outer band.
\end{NumRes}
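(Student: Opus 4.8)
The plan is to realise $\widehat F_N$ as a semiclassical Fourier integral operator with effective Planck constant $\hbar=(2\pi\abs N)^{-1}$ and to count the outer-band resonances as the rank of the corresponding spectral projector, which is then evaluated by a Weyl-type asymptotic. Fix $\epsilon>0$ small. By Theorem~[\citen{FT15},\,1.3.4] there is, for every $\abs N\ge N_\epsilon$, a circle $\abs z=\eta$ with $r_1+\epsilon<\eta<r_0-\epsilon$ on which $(z-\widehat F_N)^{-1}$ is bounded by a constant independent of $N$, and this circle separates the outer band $\{\abs z\in(r_0-\epsilon,r_0+\epsilon)\}$ from the remainder of the spectrum; by Theorem~[\citen{FT15},\,1.3.1] that outer part consists, once $r$ is large, of finitely many eigenvalues of finite multiplicity. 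Hence
\[
\Pi_N:=\frac{1}{2\pi i}\oint_{\abs z=\eta}(z-\widehat F_N)^{-1}\,dz
\]
is a finite-rank idempotent on $\cH_N^r(P)$, and the quantity to be computed is $\operatorname{rank}\Pi_N=\operatorname{Tr}\Pi_N$.

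Next I would microlocalise using the Bargmann transform built into the construction of $\cH_N^r(P)$. On the $N$-th fibre mode, $\widehat F_N$ is conjugate, modulo an operator smoothing on the relevant energy shell, to a weighted transfer operator on the Bargmann space over $\cS$ that quantises the symplectic Anosov map $F$; the effective Planck constant is $\hbar=(2\pi\abs N)^{-1}$ because the connection $A$ has curvature $-2\pi i\,\pi^*\omega$, so the $N$-th mode is identified with the space of sections of $L^{\otimes N}$, where $L\to\cS$ is the prequantum line bundle with $c_1(L)=[\omega]$. The outer band $\{\abs z\approx\lambda^{1/2}\}$ is the one in which the leading factor $\lambda$ combines with the half-power of the stable Jacobian, and in that band the model operator is, up to a contribution of rank $\cO(1)$ uniformly in $N$, unitarily equivalent to the prequantum transfer operator acting on the holomorphic sections $H^0(\cS,L^{\otimes N})$. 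Establishing this identification — that $\Pi_N$ coincides, modulo a finite-rank error uniform in $N$, with the Bergman/Szeg\H{o} projector onto $H^0(\cS,L^{\otimes N})$ — is the step I expect to be the main obstacle: it requires a normal-form analysis of $\widehat F$ near its trapped set and control of the way the anisotropic weight interpolates between the stable and unstable directions, i.e.\ the band-structure theorem upgraded from spectral radii to spectral projectors.

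Granting this, the count is a Weyl law. By Riemann--Roch, equivalently by the Bergman-kernel asymptotics for the positive line bundle $L$ on $\cS$,
\[
\dim H^0(\cS,L^{\otimes N})=N\int_\cS\omega+\cO(1)=N\,\Vol_\omega(\cS)+\cO(1),
\]
the $\cO(1)$ absorbing the Todd term and the uniform finite-rank discrepancy above. Thus $\operatorname{rank}\Pi_N=N\,\Vol_\omega(\cS)+\cO(1)$, which is the number of eigenvalues of $\widehat F_N$ in the outer band counted with multiplicity, and in particular it is $\approx N$. A variant that bypasses the line-bundle picture is to expand $\operatorname{Tr}\Pi_N$ directly by stationary phase on the diagonal: the principal symbol of $\Pi_N$ is the indicator of the outer energy shell, whose $\hbar$-normalised symplectic volume is $N\Vol_\omega(\cS)$, and the delicate point there is again the uniformity of the remainder in $N$, for which the $N$-independent resolvent bound of Theorem~[\citen{FT15},\,1.3.4] is essential.
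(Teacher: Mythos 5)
You should first note that the paper does not prove this statement at all: it is recalled verbatim from Faure \& Tsujii (Theorem~1.3.8 of \cite{FT15}), so there is no internal argument to compare with. Judged as a proof sketch of that cited result, your outline does follow the strategy actually used in \cite{FT15} — band structure, Riesz projector for the outer band via the $N$-uniform resolvent bound, identification of the outer band with a quantization of $(\cS,\omega)$, and a dimension count $\dim H^0(\cS,L^{\otimes N})=N\Vol_\omega(\cS)+\cO(1)$ — but as written it has a genuine gap, and you have located it yourself: the clause beginning ``Granting this''. The assertion that the outer-band spectral projector $\Pi_N$ agrees, modulo a finite-rank error that is $\cO(1)$ \emph{uniformly in} $N$, with the Szeg\H{o}/Bergman projector onto $H^0(\cS,L^{\otimes N})$ is not a technical verification to be deferred; it is the entire content of the theorem. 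Everything you do establish (that $\Pi_N$ is a well-defined finite-rank idempotent, by quasi-compactness and the $N$-independent bound on $(z-\widehat F_N)^{-1}$ on the separating circle, and the Riemann--Roch count for a positive line bundle) is the easy part; the hard part — constructing the ``quantum operator'', proving that the outer-band eigenspace is asymptotically isometric to the quantum Hilbert space, and controlling the anisotropic weight so that the comparison of projectors is uniform in the semiclassical parameter $\hbar=(2\pi\abs{N})^{-1}$ — is what occupies the bulk of the Faure--Tsujii monograph and cannot be absorbed into a one-line normal-form remark.

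Your proposed ``variant'' via a direct stationary-phase expansion of $\operatorname{Tr}\Pi_N$ has the same defect in sharper form: $\Pi_N$ is a contour integral of the resolvent of a transfer operator on an anisotropic space, not a pseudodifferential operator whose principal symbol you may simply declare to be the indicator of the outer band, so the claim that its trace is the $\hbar$-normalised symplectic volume is again precisely the statement to be proved. In short, the roadmap is the right one and matches the source, but neither route as presented closes the central identification, so the proposal should be regarded as an announcement of the scheme of \cite{FT15} rather than a proof.
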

\noindent
We know from the Franks-Newhouse Theorem \cite{Franks70, Newhouse70}  that an Anosov diffeomorphism of a two-dimensional compact Riemannian manifold (in particular co-dimension one) is topologically conjugate to a hyperbolic toral automorphism. The spectrum of the transfer operator for the specific case of linear hyperbolic maps on \(\bT^2\) has been studied in Faure \cite{Faure07}, and the resonances are shown to precisely lie on circles of radius \(r_k.\) In this specific setting, 
\(\hat{F}_N\) has exactly \(N\) eigenvalues (counting multiplicity) on each circle of radius \(r_k\).
Moreover, it is shown~{\cite[Thm\,1]{Faure07}} that the resonances on different circles have the same phases and differ only in modulus by some power of \(\lambda\).

\begin{rem}
Observe that the outer band of the spectrum lies outside of the unit circle and, since \( {\widehat{F}} = \sum_{N\in \bZ}  {\widehat{F}}_N\), we will inevitably have to manage a countable\footnote{In \cite[Rem.\,2.15]{GL16}, it was written that the case of countable deviation spectrum corresponds to flows whereas the  case of finite spectrum corresponds to maps. Instead we note here that the presence of a neutral direction (e.g., in the present work as well as when studying flows) is the distinguishing factor which determines the unavoidable presence of a countable number of eigenvalues in the problem.} number of eigenvalues if we want to obtain explicit  properties from the spectral information.
\end{rem}

\section{Additional Functional Analytic Information}
\label{sec:ExtraFunc}
This section is devoted to the details which are required in order to use the results from Section~\ref{sec:Spectrum} for the estimates related to the deviation of ergodic averages and to the cohomological equation problem.
First impressions may suggest serious issues using the spectral information from \( \widehat F_N\) as the spectrum of \( {\widehat{F}} = \sum_{N\in \bZ}  {\widehat{F}}_N\) is dense~\cite[Thm.\,1.3.11]{FT15} on each circle of radius \(\lambda^{\frac{1}{2}-k}\), \(k\in\{0,1,2,\ldots\}\). Nevertheless there is sufficient structure in order to deduce useful and explicit information; in particular the spectrum of \(\widehat{F}_N \) can be used to produce a (countable) spectral decomposition of the operator  \(\widehat{F} \) on a relevant space. Additionally, adequate estimates indicating that this norm is applicable to estimating smooth ergodic integrals must be obtained.

Since \( \cC^\infty(P) =\bigoplus_{N\in\bZ}  \cC^\infty_N(P)   \) it is convenient to define, for  \(\kappa>1\) (to be fixed shortly), the norm
\[
 \norm{h}_{\cH} := \sup_{N\in\bZ} \abs{N}^\kappa  \norm{h_N}_{\cH^r_N}
\]
where \(h = \sum_N h_N\) and \(h_N\in \cC^\infty_N(P)\) for each \(N\in \bZ\).

\begin{lem}
 \label{lem:BoundedHr}
 If \(h \in \cC^\infty_N(P)\) then  \(\norm{h}_{\cH} <\infty\).
\end{lem}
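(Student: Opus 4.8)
The plan is to show that the anisotropic Sobolev norm of the $N$-th fibre Fourier mode of a smooth function decays faster than any polynomial in $|N|$, so that the weight $|N|^{\kappa}$ cannot destroy finiteness. When $h \in \cC^\infty_N(P)$ for a single fixed $N$, the decomposition $h = \sum_M h_M$ has only the term $h_N = h$, and the claim is immediate: $\norm{h}_{\cH} = |N|^{\kappa}\norm{h}_{\cH^r_N}$, which is finite by the inclusion $\cC^\infty_N(P) \subset \cH^r_N(P)$ of \cite[1.3.1]{FT15}. The substance of the lemma, and what the rest of Section~\ref{sec:ExtraFunc} needs, is the corresponding statement for an arbitrary $h \in \cC^\infty(P)$, so I would set up the argument for that case, which subsumes the single-mode one.

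I would isolate two ingredients. First, a quantitative, $N$-uniform version of the embedding of \cite[1.3.1]{FT15}: there are a constant $C>0$ and integers $d,n\geq 0$, depending only on $r$ and on the fixed geometry of $(P,F)$, such that $\norm{g}_{\cH^r_N}\leq C\,|N|^{d}\,\norm{g}_{\cC^{n}(P)}$ for every $g \in \cC^\infty_N(P)$ and every $N \neq 0$. Second, the elementary smoothing estimate for the fibrewise Fourier projection: for each $m\geq 0$ there is $C_m>0$ with $\norm{h_N}_{\cC^{n}(P)}\leq C_m\,|N|^{-m}\,\norm{h}_{\cC^{n+m}(P)}$, obtained by a routine integration by parts $m$ times in the fibre variable (equivalently, by applying $m$ powers of the fibrewise generator of the $U(1)$-action, which multiplies $h_N$ by $(iN)^{m}$). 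Combining them, for $h \in \cC^\infty(P)$ and $N\neq 0$,
\[ |N|^{\kappa}\norm{h_N}_{\cH^r_N}\ \leq\ C\,|N|^{\kappa+d}\,\norm{h_N}_{\cC^{n}(P)}\ \leq\ C\,C_m\,|N|^{\kappa+d-m}\,\norm{h}_{\cC^{n+m}(P)}, \]
so choosing any integer $m > \kappa + d$ makes the right-hand side bounded uniformly in $N$ (indeed it tends to $0$ as $|N|\to\infty$, and the $N=0$ term vanishes identically because of the weight). Taking the supremum over $N$ yields $\norm{h}_{\cH}\leq C'\,\norm{h}_{\cC^{n+m}(P)}<\infty$; in particular $\norm{\cdot}_{\cH}$ is dominated on $\cC^\infty(P)$ by a fixed $\cC^{k}$-norm, hence the identity embeds $\cC^\infty(P)$ continuously into the space determined by $\norm{\cdot}_{\cH}$, which is what the constructions that follow require.

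The only non-routine step is the first ingredient, the $N$-uniform bound $\norm{g}_{\cH^r_N}\leq C|N|^{d}\norm{g}_{\cC^{n}(P)}$; everything else is integration by parts. To obtain it I would return to the construction of $\cH^r_N(P)$ in \cite{FT15}: the norm is built from a Bargmann/FBI-type transform adapted to $T^*S$ together with an escape (order) weight that is polynomially bounded, and the only way $N$ enters is as the inverse of the semiclassical parameter fixing the scale of the transform on the $N$-th mode (plus the harmless factor $e^{iN\theta}$ along the neutral fibre). Tracking this dependence through the definition and invoking standard semiclassical mapping estimates should produce the claimed power $d=d(r)$. I expect this bookkeeping — verifying that no worse than polynomial growth in $|N|$ is picked up — to be the main technical point; once it is in place the lemma follows at once.
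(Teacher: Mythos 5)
Your proposal is correct and takes essentially the same route as the paper: both rest on the fact that the anisotropic weight is polynomially bounded in frequency, so the \(\cH^r_N\)-norm of the \(N\)-th fibre mode of a smooth function is at worst polynomial in \(\abs{N}\) (the paper simply cites \cite[(1.6.5)]{FT15} for this, where you propose to re-derive it from the Bargmann-transform construction), and then the rapid Fourier decay of the modes of a smooth function beats the weight \(\abs{N}^{\kappa}\). Your observation that the statement as literally written (a single mode \(h\in\cC^\infty_N(P)\)) is immediate, and that the substantive content is the bound for general \(h\in\cC^\infty(P)\) via \(h=\sum_N h_N\), matches what the paper's own proof actually argues.
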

\begin{proof}
 Since \(\cH^r(P) \) is a generalised Sobolev space with an anisotropic weight that is bounded polynomially in frequency we know that  \cite[(1.6.5)]{FT15} there exists \(\nu \in \bN\) (indeed any \(\nu\) sufficiently large suffices) such that, for any  \(h \in \cC^\infty_N(P)\), there exists \(C_\nu >0\) such that  \(\norm{h_N}_{\cH^r_N} \leq C_\nu \abs{N}^{-\nu}\) for all \(N \in \bN\) .
 This means, as long as \(\nu\) is chosen sufficiently large, the supremum in the definition of the norm is finite.
\end{proof}

We define the weaker norm
\[
 \norm{h}_{\widetilde\cH} := \sup_{N\in\bZ} \abs{N}^{\kappa-1}  \norm{h_N}_{\cH^r_N}.
\]
 The two Banach spaces \(\cH(P) \subset \widetilde\cH(P) \) are then defined as the completion of \(\cC^\infty(P)\) with respect to, respectively, the norms \( \norm{\cdot}_{\cH}\), \( \norm{\cdot}_{\widetilde\cH}\).

\begin{bigthm}
 \label{thm:SpecInfo}
For all \(\eta>0\) there exists \(r>0\) and,
for each \({N}\in \bZ\) there exists \(K_N\in \bN\) and a set of eigenvalues \({\{\xi_{N,j}\}}_{j=1}^{K_N}\), projectors \({\{P_{N,j}\}}_{j=1}^{K_N}\) and nilpotents \({\{Q_{N,j}\}}_{j=1}^{K_N}\) such that the operators \(  \widehat{F}_N :  \cH_N^r(P) \to  \cH_N^r(P)\) satisfy the decomposition
\begin{equation}
\label{eq:DecomA}
 \widehat{F}_N
=
\sum_{j=1}^{K_N} \xi_{N,j}P_{N,j}  + Q_{N,j}  +  \widehat{F}_N P_{N,0}.
\end{equation}
The projectors and nilpotents satisfy the commutation relations: \(P_{N,j}P_{N,k} = \delta_{j,k}\), \(P_{N,j}Q_{N,k} =Q_{N,k}P_{N,j} = \delta_{j,k}Q_{N,k}\) and \(P_{N,0}\) is the projector corresponding to the part of the spectrum contained within \(\{ \abs{z}\leq \eta \}\). Viewing the operators defined on  \(\cH_N^r(P)\) as operators on \(\cH(P)\), the operator \(\widehat{F}: \cH(P) \to \widetilde\cH(P)\) satisfies the decomposition
\begin{equation}
\label{eq:DecomB}
\widehat{F}
=
\sum_{{N}\in \bZ}
\sum_{j=1}^{K_N} \xi_{N,j}P_{N,j}  + Q_{N,j}    +  \widehat{F} P_0
\end{equation}
where \(P_0 =\sum_{N\in \bZ} P_{N,0}\).
Moreover: (1) There are only a finite number of eigenvalues \(\xi_{N,j} \) with absolute value greater than \(\lambda^{\frac{1}{2}}+\epsilon\); (2) There exists \(C_\eta>0\) such that, for all \(n\in \bN\), 
\[
 \norm{\smash{\widehat{F}^n  P_{0}}}_{\widetilde\cH}
 \leq 
 \eta^n C_\eta.
\]
\end{bigthm}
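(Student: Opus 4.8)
The plan is to obtain the decomposition first in each Fourier mode, where it is the routine spectral theory of a quasi-compact operator, and then to assemble the modes into \(\cH(P)\subset\widetilde\cH(P)\); the entire difficulty lies in making the constants uniform in \(N\). (Write \(\langle N\rangle=\max\{1,\abs{N}\}\) for the innocuous regularisation of the weight at \(N=0\).)

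First, fix \(\eta>0\). Decreasing \(\eta\) slightly if it happens to be a band radius—which only strengthens the conclusions—one may choose \(\epsilon>0\) so small that \(\eta\) lies strictly between two consecutive band radii of \cite[1.3.4]{FT15} and below the outer band \(\{\abs{z}=\lambda^{1/2}\}\); then choose \(r>1\) so large that the essential spectral radius bound \(\lambda^{-(r-1)}\) of \cite[1.3.1]{FT15} is \(<\eta\), and shrink \(\epsilon\) further if necessary so that \(\{\abs{z}=\eta\}\) misses the spectrum of each of the finitely many \(\widehat{F}_N\) with \(\abs{N}<N_\epsilon\). For \(\abs{N}\geq N_\epsilon\), \cite[1.3.4]{FT15} confines the spectrum of \(\widehat{F}_N\) to the bands and—this is the decisive point—supplies a resolvent bound \(C_\epsilon\) on \(\{\abs{z}=\eta\}\) that is independent of \(N\); for \(\abs{N}<N_\epsilon\), quasi-compactness \cite[1.3.1]{FT15} already does the job. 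Either way the part of the spectrum of \(\widehat{F}_N\) in \(\{\abs{z}>\eta\}\) is a finite set \({\{\xi_{N,j}\}}_{j=1}^{K_N}\) of eigenvalues of finite algebraic multiplicity. I would then set \(P_{N,j}=\frac{1}{2\pi i}\oint_{\gamma_{N,j}}(z-\widehat{F}_N)^{-1}\,dz\) for small loops \(\gamma_{N,j}\) isolating \(\xi_{N,j}\), put \(Q_{N,j}=(\widehat{F}_N-\xi_{N,j})P_{N,j}\), and \(P_{N,0}=\id-\sum_{j}P_{N,j}=\frac{1}{2\pi i}\oint_{\abs{z}=\eta}(z-\widehat{F}_N)^{-1}\,dz\). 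The holomorphic functional calculus then yields the orthogonality and commutation relations, the nilpotency of the \(Q_{N,j}\) (because \(\widehat{F}_NP_{N,j}\) acts on the finite-dimensional space \(\operatorname{ran}P_{N,j}\) with the single eigenvalue \(\xi_{N,j}\)), the identification of \(P_{N,0}\) with the spectral projector for \(\{\abs{z}\leq\eta\}\), and—on writing \(\widehat{F}_NP_{N,j}=\xi_{N,j}P_{N,j}+Q_{N,j}\)—the decomposition \eqref{eq:DecomA}.

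Next I would assemble the modes. Since \(\cC^\infty(P)=\bigoplus_N\cC^\infty_N(P)\) and \(\widehat{F}\) is block-diagonal with blocks \(\widehat{F}_N\), every bounded operator on \(\cH_N^r(P)\) lifts to an operator on \(\cH(P)\) supported in the \(N\)-th block with the same operator norm, while as a map \(\cH(P)\to\widetilde\cH(P)\) its norm is \(\langle N\rangle^{-1}\) times its \(\cH_N^r\)-norm—this is exactly the point of the weaker target space. The \(N\)-uniform resolvent bound gives \(\norm{P_{N,0}}_{\cH_N^r}\leq\eta C_\epsilon\) and \(\norm{\widehat{F}_NP_{N,0}}_{\cH_N^r}\leq\eta^2 C_\epsilon\) for \(\abs{N}\geq N_\epsilon\) (and finite constants for the remaining modes), so \(P_0=\sum_NP_{N,0}\) and \(\widehat{F}P_0=\sum_N\widehat{F}_NP_{N,0}\) are bounded on \(\cH(P)\). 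The series \(\sum_N\sum_j(\xi_{N,j}P_{N,j}+Q_{N,j})=\sum_N\widehat{F}_N(\id-P_{N,0})\) converges in \(\cL(\cH(P),\widetilde\cH(P))\): its tail beyond \(\abs{N}>M\) has norm \(\sup_{\abs{N}>M}\langle N\rangle^{-1}\norm{\widehat{F}_N(\id-P_{N,0})}_{\cH_N^r}\), and since \(\norm{\widehat{F}_N(\id-P_{N,0})}_{\cH_N^r}\leq(1+\eta C_\epsilon)\norm{\widehat{F}_N}_{\cH_N^r}\) with \(\norm{\widehat{F}_N}_{\cH_N^r}\) bounded uniformly in \(N\) (from the construction of the spaces in \cite{FT15}), this tail tends to \(0\); the same series does \emph{not} converge in \(\cL(\cH(P),\cH(P))\), which is precisely the loss of a power of \(\langle N\rangle\) referred to in the introduction. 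Adding the two pieces gives \eqref{eq:DecomB} and shows \(\widehat{F}:\cH(P)\to\widetilde\cH(P)\) is bounded. For (1): when \(\abs{N}\geq N_\epsilon\) the spectrum of \(\widehat{F}_N\) lies in the bands, whose outermost has radius \(\lambda^{1/2}\), so no \(\xi_{N,j}\) exceeds \(\lambda^{1/2}+\epsilon\) in modulus; each of the finitely many low modes contributes only finitely many eigenvalues in the compact annulus \(\{\lambda^{1/2}+\epsilon\leq\abs{z}\leq\lambda\}\) by quasi-compactness, so only finitely many survive altogether.

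Finally, for (2): when \(\abs{N}\geq N_\epsilon\), \(\widehat{F}_N^nP_{N,0}=\frac{1}{2\pi i}\oint_{\abs{z}=\eta}z^n(z-\widehat{F}_N)^{-1}\,dz\) gives \(\norm{\widehat{F}_N^nP_{N,0}}_{\cH_N^r}\leq\eta^{n+1}C_\epsilon\); for each of the finitely many \(\abs{N}<N_\epsilon\) the restriction of \(\widehat{F}_N\) to \(\operatorname{ran}P_{N,0}\) has spectral radius \(<\eta\), whence \(\norm{\widehat{F}_N^nP_{N,0}}_{\cH_N^r}\leq c_N\eta^n\) with \(c_N<\infty\). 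With \(C_\eta=\max\{\eta C_\epsilon,\max_{\abs{N}<N_\epsilon}c_N\}\) and using \(\langle N\rangle^{\kappa-1}\leq\langle N\rangle^{\kappa}\) one obtains \(\norm{\widehat{F}^nP_0h}_{\widetilde\cH}=\sup_N\langle N\rangle^{\kappa-1}\norm{\widehat{F}_N^nP_{N,0}h_N}_{\cH_N^r}\leq\eta^n C_\eta\norm{h}_{\cH}\), which is (2)—and, notably, this part needs only the \(N\)-uniform resolvent bound, not the operator-norm bound used for the series. The main obstacle is not any single estimate but their uniformity in \(N\): everything rests on the \(N\)-independence of the resolvent estimate of \cite[1.3.4]{FT15}, which has to be propagated to \(N\)-independent control of \(P_{N,0}\), of \(\widehat{F}_NP_{N,0}\), and of its powers, and combined with a uniform bound on \(\norm{\widehat{F}_N}_{\cH_N^r}\) so that the countable sum in \eqref{eq:DecomB} converges in \(\cL(\cH(P),\widetilde\cH(P))\) and \(C_\eta\) is finite; a secondary but unavoidable nuisance is that \cite[1.3.4]{FT15} is available only for \(\abs{N}\geq N_\epsilon\), so the finitely many low modes must be handled directly from quasi-compactness.
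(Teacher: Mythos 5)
Your proposal is correct and follows essentially the same strategy as the paper: a mode-by-mode spectral decomposition obtained from the quasi-compactness and band theorems of \cite{FT15}, the contour-integral representation of \(P_{N,0}\) and of \(\widehat F_N^{\,n}P_{N,0}\) over \(\{\abs{z}=\eta\}\) combined with the \(N\)-uniform resolvent bound to obtain conclusion (2), and the passage from \(\cH\) to the weaker space \(\widetilde\cH\) to absorb the growth in \(N\) when assembling \eqref{eq:DecomB}. The one point where you genuinely deviate is the convergence of the countable sum: the paper controls it by the eigenvalue count \(K_N=\cO(\abs{N})\) (from \cite[Thm.\,1.3.8]{FT15}) played against the single power of \(\abs{N}\) lost between the two norms, whereas you bound each block wholesale via \(\norm{\smash{\widehat F_N(\id-P_{N,0})}}\leq(1+\eta C_\epsilon)\norm{\smash{\widehat F_N}}\). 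Your version buys genuine convergence in \(\cL(\cH(P),\widetilde\cH(P))\) (the tail norm actually tends to zero, not merely stays bounded), but at the price of invoking a uniform-in-\(N\) operator-norm bound on \(\widehat F_N\) that is not contained in the quoted statement of \cite[Thm.\,1.3.1]{FT15}, which gives only the spectral radius \(\lambda\); you should cite the uniform (Lasota--Yorke-type) estimates from the construction of the spaces in \cite{FT15} at that point. Conversely, the paper's counting route implicitly needs uniform control of the individual terms \(\xi_{N,j}P_{N,j}+Q_{N,j}\), so neither route is entirely free of an appeal to the uniformity of the \cite{FT15} construction. Your explicit handling of the finitely many modes \(\abs{N}<N_\epsilon\), where the band theorem is unavailable and \(\{\abs{z}=\eta\}\) must be moved off the spectrum, is a detail the paper passes over silently and is treated correctly here.
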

\noindent
The above spectral information corresponds to the separation of the spectrum into the part outside of the circle \(\abs{z}=\eta\) and the remainder inside the circle \(\abs{z}=\eta\). We refer to \(\eta\) as the magnitude of the remainder term.

\begin{rem}
Viewing the operator as an operator acting from a stronger to a weaker space is a convenience since this result suffices entirely for our present purposes and requires relatively weak assumptions. There is some similarity to the idea which can be used for flows~\cite{Butterley16} when oscillatory cancellation results are not as strong as could be desired. However in present work the motivation is different in the sense that good oscillatory cancellation estimates are already available~\cite{FT15} but now we wish to go deeper into the spectrum.
\end{rem}

\begin{rem}
In reality, since our case reduces to the case of studying lifts of toral automorphisms (i.e., we are in the algebraic case), we know \cite{Faure07} that this ``finite number of eigenvalues'' in the above statement is actually a singleton consisting only of the eigenvalue at \(\lambda\) which corresponds to the measure of maximal entropy (in the linear case this coincides with the SRB measure).
\end{rem}

\begin{proof}[Proof of Theorem~\ref{thm:SpecInfo}]
Choosing \(\epsilon>0\) small  and \(r\) large (observe that \(\epsilon>0\) and \(r>0\) in Theorem~\cite[1.3.4]{FT15} are independent of \(N\)) the quasi compactness result of Theorem~\cite[1.3.1]{FT15} implies immediately that we have the spectral decomposition~\eqref{eq:DecomA} such that the finite sum of terms covers all of the spectrum outside of the set \(\abs{z}\leq \eta\). Note that the order of the nilpotents is not greater than \({K_N}\). Choosing \(\eta>0\) slightly smaller if required, we can assume that \(r_k + \epsilon< \eta < r_{k-1} -\epsilon \) and consequently  \(P_{N,0} = -\frac{1}{2\pi i}\int_{\abs{z}=\eta} R_N(z) \ dz\) where \(R_N(z) := \smash{(z - \widehat{F}_N)^{-1}} \) is the resolvent operator.
 
In order to meaningfully consider iterates we need to establish control on \( \norm{\smash{ {\widehat{F}}_N^n  P_{N,0}}} \). Observe\footnote{The case \(n=0\) is immediate from the previously mentioned integral formula for \(P_{N,0}\). That \(R(z) =-\sum_{n=0}^{\infty} z^{-(n+1)}  {\widehat{F}}_N^n  \) allows us to show that \(  {\widehat{F}}_N^n R_N(z) = zR_N(z) + \id \). This suffices to show that if the equality is true for \(n\) then it is true for \(n+1\) and consequently for all \(n\in\bN\). } 
that, for all \(n\in \bN\),
\[
 \widehat{F}_N^n  P_{N,0}
 =
 - \frac{1}{2\pi i}\int_{\abs{z}=\eta} z^n R_N(z) \ dz
 \]
and so the (uniform in \(N\)) control on \(\sup_{\abs{z}=\eta} \norm{R(z)} \) given by Theorem \cite[Theorem~1.3.4]{FT15} implies that, as an operator \(\smash{\widehat{F}_N^n  P_{N,0}} : \cH^r_N(P) \to \cH^r_N(P) \),
\[
 \norm{\smash{\widehat{F}_N^n  P_{N,0}}}
 \leq 
 \eta^n C_\epsilon.
\]
The uniformity, in \(N\), of this estimate means that the same estimate holds for \(\norm{\smash{\widehat{F}^n  P_{0}}}_{\cH}\).

This is not the only issue related to the spectral representation having a countable but not finite sum. Finally we must deal with the fact that \(K_N\) is not bounded but grows with \(\abs{N}\). Here we use the fact that we consider the full decomposition as an operator between a stronger and weaker space. We already know that \(K_N\) is bounded above by a constant which is proportional to \(\abs{N}\) (and by the number of bands being considered but this is constant once \(\eta>0\) is fixed). This, by a simple estimate of the sum and the definition of the \(\cH\) and \(\widetilde\cH\) norms guarantees that the countable sum converges. 
\end{proof}

The final functional analytic ingredient is to show that certain objects are elements of the dual and obtain good bounds for these.  
This is a notion of compatibility of the norm and is essential in order to use the spectral results to study the pertinent questions related to nilflows. (Such an estimate corresponds to \cite[Lem.\,5.11]{Adam18} and \cite[Lem.\,B.4]{GL16}.)
Given a finite length piece of stable manifold \(\gamma \subset P\) (stable manifold of \(\tilde f:P \to P \)) we define, for any \(\varphi \in \cC_0^q(\gamma)\), \(h\in \cC^\infty(P)\),
\begin{equation}\label{eq:stablefunctional}
 H_{\varphi}(h) = \int_\gamma \varphi \cdot h.
\end{equation}
\begin{lem}
\label{lem:InDual}
For any $\nu\in \bN$ there exists $C_\nu >0$ such that, for all $h \in \cC^\infty(P)$, \(N\in \bN\), 
stable curve \(\gamma\) of unit length, $\varphi \in \cC_0^\nu(\gamma)$, 
\[
\abs{H_{\varphi}(h)} \leq C_\nu  \norm{
h}_{\cH^r_N}\norm{\varphi}_{\cC^{\nu}(\gamma)}.
\]
\end{lem}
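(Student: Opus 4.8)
The plan is to reduce the claim to a known property of the anisotropic Sobolev space $\cH^r_N(P)$: the integration-against-a-smooth-density functional along a stable curve lies in its dual, with a bound that is uniform in $N$. Concretely, $H_\varphi$ is a distribution supported on $\gamma$, and since $\gamma$ is a piece of stable manifold of $\tilde f$, its conormal directions point into the unstable/neutral cone — precisely the directions in which the anisotropic weight grows. So the pairing $\int_\gamma \varphi\cdot h$ should be controlled by the $\cH^r_N$-norm of $h$ once $\varphi$ is sufficiently smooth (smoothness of $\varphi$ is what lets us absorb the polynomial-in-frequency loss coming from the weight in the stable/conormal directions). I would first recall the explicit description of the norm $\norm{\cdot}_{\cH^r_N}$ from \cite[\S1.6]{FT15}: it is built from a Bargmann-type (FBI/wave-packet) transform $h\mapsto \cB h$ on $P$, with an anisotropic weight $W_r$ that is $\approx 1$ in the unstable direction, decays like $\abs{\xi}^{-r}$ along the stable conormal, and grows at most polynomially in the fibre frequency $N$.

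The key steps, in order: (1) Localise. Cover $\gamma$ by finitely many charts adapted to the splitting $E_u\oplus E_s\oplus$(fibre), in which $\gamma$ is (a graph over) the stable coordinate axis; use a partition of unity subordinate to this cover, so it suffices to bound $\int \varphi(s) h(s,0,0)\,ds$ for $h$ supported in one chart. (2) Express the restriction $h|_\gamma$ via the Bargmann transform: $h(x) = \int \overline{(\cB\,\text{wave packet})(x)}\,(\cB h)(\text{wave packet})\,d(\text{wave packet})$, so that $H_\varphi(h) = \int (\cB h)(w)\cdot \big(\int_\gamma \varphi\cdot\overline{\cB_w}\big)\,dw$ schematically. (3) Estimate the inner integral $\int_\gamma \varphi\cdot\overline{\cB_w}$: a wave packet $\cB_w$ centred at phase-space point $w=(x_0,\xi_0)$ restricted to the stable curve through $x_0$ and integrated against $\varphi\in\cC^\nu_0$ is, by non-stationary phase / repeated integration by parts in the stable variable (using the $\nu$ derivatives of $\varphi$), rapidly decaying in the stable-conormal component of $\xi_0$ and Schwartz-like in the transverse components; crucially it produces a factor that is summable against the inverse weight $W_r^{-1}$ provided $r$ and $\nu$ are large enough, with constants depending only on $\nu$ (and on $r$, which is fixed). (4) Apply Cauchy–Schwarz in the $w$-integral against the weighted $L^2$ that defines $\norm{h}_{\cH^r_N}$, collecting $\norm{h}_{\cH^r_N}$ times a finite constant times $\norm{\varphi}_{\cC^\nu(\gamma)}$. (5) Check the $N$-dependence: the fibre frequency is exactly $N$ on $\cC^\infty_N(P)$, the weight grows only polynomially in $N$, but the same polynomial power of $N$ appears in $\norm{h}_{\cH^r_N}$ via $\cB h$; more simply, one works on the $N$-th isotypic component where the fibre variable decouples (the circle-extension structure), and the stable/unstable estimate is then the $\bT^2$ (or $S$) estimate, independent of $N$. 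Hence the constant $C_\nu$ is uniform in $N$, as asserted (and as needed for the applications, cf. the remark after Theorem~[\citen{FT15},\,1.3.4]).

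The main obstacle I expect is step (3): getting the decay of $\int_\gamma\varphi\cdot\overline{\cB_w}$ to match the anisotropic weight $W_r$ well enough that the subsequent $w$-integral converges, \emph{and} tracking that this works on every isotypic component with a constant independent of $N$. The delicate point is that $\gamma$ is only a stable \emph{curve}, not the full stable leaf, so after integrating by parts $\nu$ times one also picks up boundary terms and terms where derivatives fall on the cutoff; these must be handled by choosing $\varphi\in\cC^\nu_0$ (compactly supported in the interior of $\gamma$, which kills boundary terms) and by taking $\nu$ large relative to $r$. An alternative, cleaner route — which I would fall back on if the wave-packet bookkeeping becomes unwieldy — is to invoke the abstract statement in \cite{FT15} (or the analogues \cite[Lem.\,B.4]{GL16}, \cite[Lem.\,5.11]{Adam18}) that the anisotropic Sobolev norm dominates integration along admissible stable curves, and then only verify the uniformity in $N$ by the isotypic decomposition argument of step (5).
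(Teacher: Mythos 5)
Your plan is, in substance, the paper's own argument: the paper also treats \(H_\varphi\) as a distribution, uses \(\cB_N^*\cB_N=\id\) to write \(\langle\tilde\varphi,h\rangle = \langle \tfrac{1}{\cW^r_N}\cB_x\tilde\varphi,\,\cW^r_N\cB_x h\rangle\), gains decay \(\abs{N\xi_p}^{-\nu}\) by integrating by parts \(\nu\) times along the stable variable (your non-stationary phase step), pairs this against the inverse escape function \(\tfrac{1}{\cW^r_N}\lesssim \abs{\xi_p}^r\abs{N}^{r/2}+\abs{\xi_q}^{-r}\abs{N}^{-r/2}\), and closes with Cauchy--Schwarz against the weighted \(L^2\) norm defining \(\norm{h}_{\cH^r_N}\); the requirement that \(\nu\) be large relative to \(r\) appears there too.

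The one concrete point where your write-up would break down as stated is step (1): you cover \(\gamma\) ``by finitely many charts'' as if the atlas were independent of \(N\). In the Faure--Tsujii construction the charts (and the local data defining \(\norm{\cdot}_{\cH^r_N}\)) depend on \(N\) and have diameter of order \(\abs{N}^{-1}\), so a unit-length stable curve meets on the order of \(\abs{N}\) charts. The paper therefore splits the proof in two: it first proves the estimate for stable curves of length \(\abs{N}^{-1}\) (so that the wave-packet computation takes place in a single chart, and the Gaussian localisation \(\exp(-\pi\abs{N}(x_p^2+x_q^2))\) together with the \(\abs{N\xi_p}^{-\nu}\) decay yields an \(N\)-independent constant), and then sums over the \(\cO(\abs{N})\) pieces of a unit-length curve, using the Fourier decomposition \(h=\sum_N h_N\) and the \(\abs{N}^\kappa\)-weighted decay built into the \(\norm{\cdot}_{\cH}\) norm to absorb the resulting factor of order \(\abs{N}\). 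Without this two-scale bookkeeping your step (5) claim of \(N\)-uniformity is not justified: working ``on the \(N\)-th isotypic component'' does not by itself remove the \(\abs{N}\) coming from the number of chart-sized pieces, so you would either be assuming \(N\)-uniform charts that the framework does not provide, or silently losing a factor \(\abs{N}\) relative to the stated bound. Your fallback of quoting an abstract dual-pairing lemma does not resolve this either, since the whole content of the lemma here is precisely the uniformity in \(N\).
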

%
In order to prove the lemma we first recall the pertinent details of the definition of the anisotropic Sobolev spaces. Then we 
prove a version of  Lemma~\ref{lem:InDual} applicable when \( \abs{\gamma}=O(\abs{N}^{-1})\). Lastly, we combine the estimates for pieces of length \(\abs{N}^{-1}\) of the stable curve to obtain the desired estimate for when \(\abs{\gamma}=1\).

Using charts to work locally and observing that for each fixed \(N\) the behaviour in the neutral direction is predetermined, the definition of the norm~\cite[Def.\,6.4.1]{FT15}  on the manifold is reduced to defining\footnote{The reference allows for higher dimension but here we restrict to the case (\(d=1\)) as required in this present work. Although the reference uses interchangeable \(N\) and \(\hbar\) where \(\hbar = \frac{1}{2\pi N} \)  here we systematically use \(N\). } the norm for functions in \(\cC^\infty(\bR^2)\).  The connection between the \emph{local data}~\cite[(6.3.1)]{FT15} used for defining the norm and the original observable is the natural one \cite[Prop.\,6.3.2]{FT15}. It is important to note that the charts are chosen to depend on \(\abs{N}\) and the size of the charts is of order \(\abs{N}^{-1}\).

The Bargmann transform  \(\cB_N : L^2(\bR^2) \to L^2(\bR^4)\)  is  defined \cite[Def.\,3.1.1]{FT15} as
\( (\cB_N h)(x,\xi) = {( h, \phi_{x,\xi})}_{L^2(\bR^2)}\) 
where the Bargmann kernel is defined as 
\[
 \phi_{x,\xi}(y) := \sqrt{2N} \exp\left( {\pi i N}\xi \cdot (2y-{x}) - {\pi N}\abs{y-x}^2 \right).
 \]
Then, using a simple scaling  \cite[(4.2.7)]{FT15}, the operator \(\cB_x : L^2(\bR^2) \to L^2(\bR^4)\) is defined as \(\cB_x := \tilde\sigma^{-1} \circ \cB_N \circ \sigma\) where \(\sigma h(x) := 2^{-\frac{1}{4}}h(2^{-\frac{1}{2}}x)\) and \(\tilde \sigma v(x,\xi) := v(2^{-\frac{1}{2}}x, 2^{\frac{1}{2}}\xi)\).

The escape function  is \(\cW_N^r(x,\xi):=W_N^r(\zeta_p,\zeta_q)\). Here \((\zeta_p,\zeta_q)\)  are the coordinates \cite[(4.2.5)]{FT15} in phase space (\(\xi\)) which are called \emph{normal coordinates} \cite[Prop.\,2.2.6]{FT15} and correspond to the stable and unstable dynamics. 
On the other hand  \cite[Def.\,3.3.2]{FT15} \(W_N^r(\zeta_p,\zeta_q) := W^r( {\scriptstyle \sqrt{2 \pi N} } \zeta_p,  {\scriptstyle\sqrt{2 \pi N}} \zeta_q)\) is the anisotropic weight function based on the fixed function \( W^r(\zeta_p,\zeta_q)\) which is defined \cite[\S3.3]{FT15} to be smooth and, in particular, such that \( W^r(\zeta_p, 0) \approx \abs{\zeta_p}^{-r}\) and   \( W^r(0, \zeta_q) \approx \abs{\zeta_q}^{r}\) when \( \abs{\zeta_p}\), \( \abs{\zeta_q}\) are large.

Using these two ingredients, the norm (the local version used within charts) is defined \cite[Def.\,4.4.1]{FT15} as
\[
\norm{h}_{\cH^r_N(\bR^2)} := \norm{ \cW^r_N \cdot \cB_x h }_{L^2(\bR^4)}.
\]

\begin{lem}
\label{lem:FirstBound}
For any $\nu\in \bN$ there exists $C>0$ such that, for all \(N\in \bZ\), $h \in \cC^\infty_N(P)$, 
stable curve \(\gamma\) of length \(\abs{N}^{-1}\) and $\varphi \in \cC_0^\nu(\gamma)$, 
\[
\abs{H_{\varphi}(h)} \leq C  \norm{
h}_{\cH^r_N(P)}\norm{\varphi}_{\cC^{\nu}(\gamma)}.
\]
\end{lem}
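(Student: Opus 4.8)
The plan is to carry out the analysis inside a single chart, so that --- as prepared above --- it suffices to prove the estimate for $h\in\cC^\infty(\bR^2)$, a stable curve $\gamma$ of length $\abs{N}^{-1}$ and $\varphi\in\cC_0^\nu(\gamma)$, and then to transfer the conclusion back to $P$ using the correspondence between the local data and the observable \cite[Prop.\,6.3.2]{FT15} (such a $\gamma$ meets only boundedly many charts, and the local norms in the charts meeting $\gamma$ are controlled by $\norm{h}_{\cH^r_N(P)}$). The starting point is the reconstruction property of the Bargmann transform: with the chosen normalisation $\cB_N$ is, up to a fixed constant, an isometry onto its range, and the rescalings $\sigma$, $\tilde\sigma$ are unitary, so, up to a fixed constant which we henceforth suppress,
\[
 {\langle h,g\rangle}_{L^2(\bR^2)}={\langle\cB_x h,\cB_x g\rangle}_{L^2(\bR^4)}.
\]
Taking $g=\overline\varphi\,\mu_\gamma$, where $\mu_\gamma$ is arclength measure on $\gamma$ --- an identity which passes from $g\in L^2$ to this compactly supported distribution by approximation, since $h$ is smooth --- yields $H_\varphi(h)={\langle\cB_x h,\cB_x(\overline\varphi\,\mu_\gamma)\rangle}_{L^2(\bR^4)}$. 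Inserting $\cW^r_N\cdot(\cW^r_N)^{-1}$, Cauchy--Schwarz together with the definition $\norm{\cW^r_N\,\cB_x h}_{L^2(\bR^4)}=\norm{h}_{\cH^r_N(\bR^2)}$ gives $\abs{H_\varphi(h)}\le\norm{h}_{\cH^r_N(\bR^2)}\cdot\norm{(\cW^r_N)^{-1}\cB_x(\overline\varphi\,\mu_\gamma)}_{L^2(\bR^4)}$, so everything reduces to bounding the second factor by $C_\nu\norm{\varphi}_{\cC^\nu(\gamma)}$, uniformly in $N$.

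For that one estimates $\cB_x(\overline\varphi\,\mu_\gamma)(x,\xi)$ pointwise. The Bargmann kernel is a Gaussian of width of order $N^{-1/2}$ in the position variable, so integrating it against $\overline\varphi\,\mu_\gamma$ produces Gaussian decay in $x$ away from $\gamma$, and as $\gamma$ and the chart have diameter $\abs{N}^{-1}$ the $x$-integration contributes a fixed negative power of $N$. The decisive point is the frequency behaviour. Parametrising $\gamma$ by arclength and using that $\gamma$ is a short piece of a stable manifold, the only oscillation of the integrand along $\gamma$ is carried by the phase dual to the stable direction; as $\varphi\in\cC_0^\nu(\gamma)$ is compactly supported in $\gamma$, integrating by parts $\nu$ times --- with no boundary terms --- yields decay of order $(1+N\abs{\zeta_p})^{-\nu}$ in the stable frequency coordinate $\zeta_p$, whereas in the transverse coordinate $\zeta_q$ the transform is merely bounded (as expected, $\overline\varphi\,\mu_\gamma$ being delta-like across $\gamma$). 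One pairs this against the escape function: $(\cW^r_N)^{-1}$ grows only polynomially (of order $r$) in $\zeta_p$, which the $\cC^\nu$-decay dominates once $\nu$ is large enough relative to the fixed $r$ for the $\zeta_p$-integral to converge; and $(\cW^r_N)^{-1}$ behaves like $(1+\sqrt N\abs{\zeta_q})^{-r}$ in $\zeta_q$, which is integrable since $r>1$. Collecting the powers of $N$ --- from the normalisation $\sqrt{2N}$ of the Bargmann kernel, the length $\abs{N}^{-1}$ of $\gamma$, the Gaussian $x$-integration, and the two frequency integrations --- one checks that they combine to a bound of the required form, with a constant independent of $N$. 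There is in fact a negative power of $N$ left over, and this is precisely what makes it possible to sum $\abs{N}$ such pieces --- one per chart met by a unit-length stable curve --- in the proof of Lemma~\ref{lem:InDual}, using also that pieces associated with different charts are almost orthogonal.

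The step I expect to be the main obstacle is this last bookkeeping, together with the geometry behind the integration by parts: the normal coordinates $(\zeta_p,\zeta_q)$ are defined through the hyperbolic dynamics, not through $\gamma$, so one must use that the tangent direction of a stable curve varies by a negligible amount over a length $\abs{N}^{-1}$ and stays uniformly transverse to the $\zeta_q$-axis, in order to identify ``frequency along $\gamma$'' with $\zeta_p$ up to errors that the change of variables keeps under control; the curvature of $\gamma$ and the $N$-dependence of the charts then have to be propagated through that change of variables so that the final constant does not secretly depend on $N$. A minor additional point is to justify the Bargmann pairing identity for the distribution $\overline\varphi\,\mu_\gamma$, which is routine once $\mu_\gamma$ is approximated by smooth densities in a Sobolev norm of sufficiently negative order.
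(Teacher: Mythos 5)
Your argument is essentially the paper's own proof: reduce to local data in a single chart, use \(\cB_N^*\cB_N=\id\) to rewrite \(H_\varphi(h)\) as the \(L^2(\bR^4)\) pairing of \(\cW^r_N\cdot\cB_x h\) with \((\cW^r_N)^{-1}\cdot\cB_x\tilde\varphi\), apply Cauchy--Schwarz against \(\norm{h}_{\cH^r_N}\), and bound \((\cW^r_N)^{-1}\cB_x\tilde\varphi\) in \(L^2\) via the Gaussian localisation in \(x\), the \(\abs{N\xi_p}^{-\nu}\) decay from integrating by parts along the stable curve, and the anisotropic bounds on the escape function, with \(\nu\) and \(r\) suitably related. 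The only divergence is a side remark beyond the scope of this lemma (the paper's proof of Lemma~\ref{lem:InDual} absorbs the \(\sim\abs{N}\) pieces using the \(\abs{N}^{\kappa}\) weight in \(\norm{\cdot}_{\cH}\) rather than a leftover negative power of \(N\)), so this is the same approach.
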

\begin{proof}
We can assume that we are working within a single chart and, using local data \cite[(6.3.1)]{FT15}, that we are considering the integral over \(\tilde \gamma \subset \bR^2\) where \(\tilde \gamma\) is the projection of \(\gamma \subset P\) onto \(\mathbb{R}^2\). Abusing notation we will now consider \(h\) to be the local data.
We will estimate \(\tilde\varphi(h)  =\int_{\tilde\gamma} \varphi \cdot h\).
Considering \(\tilde \varphi\) as a distribution on Schwarz space we wish to estimate \(\abs{\tilde\varphi(h)}= \abs{\smash{ {\langle \tilde\varphi, h\rangle}_{L^2(\bR^2)}}}\).
Using the property of the Bargmann transform, \(\cB^*_N \cB_N =id\), shown in \cite[Prop.\,3.1.4]{FT15} (note that \(\cB_N \cB_N^*\) is not the identity but instead the orthogonal projection onto the image of \(\cB_N\) in \(L^2(\bR^4)\)) and introducing the escape function \cite[Def.\,3.3.2]{FT15} we can write
\[
{\langle \tilde\varphi, h\rangle}_{L^2(\bR^2)}
= {\langle \tilde\varphi, \cB_x^*\cB_x h\rangle}_{L^2(\bR^4)}
= {\langle\tfrac{1}{\cW^r_N} \cdot \cB_x\tilde\varphi, \cW^r_N \cdot \cB_x h\rangle}_{L^2(\bR^4)}.
\]
By definition of the norm,
\(\norm{h}_{\cH^r_N(\bR^2)} = \norm{ \cW^r_N \cdot \cB_x  h }_{L^2(\bR^4)} \) we know that
\[
 \abs{\smash{ {\langle \tilde\varphi, h\rangle}_{L^2(\bR^2)}}}
\leq 
\norm{\smash{\tfrac{1}{\cW^r_N}} \cdot \cB_x\tilde\varphi}_{L^2(\bR^4)}
\norm{h}_{\cH^r_H(\bR^2)} 
\]
and consequently, in order to complete the estimate of the lemma, it suffices to estimate \(\norm{\smash{\tfrac{1}{\cW_N^r}} \cdot \cB_x\tilde\varphi}_{L^2(\bR^4)}\).

 We can assume that we are working in coordinates \cite[(4.2.5)]{FT15} such that \(y = (y_p, y_q)\) and that \(\gamma = \{(y_p, y_q) : y_p\in (-1,1), y_q=0\}\).
We calculate that \( (\cB_N \tilde\varphi)(x,\xi) \) is equal to
\begin{multline*}
\sqrt{2N}
\int_{-1}^{1} \varphi(y_p)   \exp\left(  \pi i N
\left(\begin{smallmatrix}
\xi_p \\ \xi_q
\end{smallmatrix}\right)
 \cdot 
 \left(\begin{smallmatrix}
2y_q-{x_p} \\ -{x_q}
\end{smallmatrix}\right)
- \pi N \abs{ \left(\begin{smallmatrix}
y_p-{x_p} \\ -{x_q}
\end{smallmatrix}\right)}^2 \right) \ dy_p
\\
= 
\sqrt{2N}
\int_{-1}^{1} \varphi(y_p)   \exp\left( \pi i N
\xi_p 
 \left(2y_p-{x_p} \right)
-\pi  N \abs{ 
y_p-{x_p} }^2 \right) \ dy_p
\\
\times
\exp\left( -\pi i N   \xi_q 
{x_q}
- \pi  N { {x_q} }^2 \right)
\end{multline*}
The smoothness of \(\varphi\), together with integration by parts, implies the decay in \(\abs{\xi_p}\). Coupling this with basic estimates we obtain (considering \(\varphi \in \cC^\nu\))
\[
\abs{(\cB_N \tilde\varphi)(x,\xi)}
\leq
C_\nu
\exp \left(-{\pi  \abs{N}} (x_p^2 + x_q^2 ) \right)  \abs{N \xi_p}^{-\nu}.
\]
To finish we recall that, because of the anisotropic estimates available for the escape function, 
\(\tfrac{1}{\cW_N^r}(x,\xi) \leq C\left(\abs{\xi_p}^r \abs{N}^\frac{r}{2}   +  \abs{\xi_q}^{-r} \abs{N}^{-\frac{r}{2}} \right)\).
We require \(\nu < r\) and thus, by multiplying the two estimates, we have shown that \(\norm{\smash{\tfrac{1}{\cW_N^r}} \cdot \cB_x\tilde\varphi}_{L^2(\bR^4)}\) is bounded, independently of \(N\).
\end{proof}

\begin{proof}[Proof of Lemma \ref{lem:InDual}]
It is now a simple task to combine the estimate of Lemma~\ref{lem:FirstBound} in order to prove the lemma. Firstly we must divide the stable curve \(\gamma\) into a number of pieces of length \(\abs{N}^{-1}\) using a partition of unity in order to guarantee that each piece can be studied within a single chart (recall that for larger \(\abs{N}\) finer charts are used). The number of pieces will be of order  \(\abs{N}\). Secondly we write \(h=\sum_{N\in \bZ} h_N\) where \(h_N \in \cC^\infty_N(P)\). This generates a countable sum of terms; we again take advantage of the decay guaranteed by the definition of the \(\norm{\cdot}_{\cH}\) norm.
\end{proof}

\section{Deviation of Ergodic Averages}
\label{sec:ErgAver}

In this section we use the information described in Section~\ref{sec:ExtraFunc}, in particular Theorem~\ref{thm:SpecInfo},  to give a precise description of the deviation of ergodic averages for the parabolic flow.  A similar concept for the study of solutions of the cohomological equation is postponed until Section~\ref{sec:Cohom}. The standing assumption of this section is that we have a flow \(\Psi_t \) which is renormalized~\eqref{eq:RenormFlow} by a partially hyperbolic map \(F\). We further require that the transfer operator associated to \(F\) has a spectral decomposition as given in the conclusion of Theorem~\ref{thm:SpecInfo} with respect to a function space (in the extended sense) which satisfies the conclusion of Lemma~\ref{lem:InDual}. This section is completely independent of the precise details of the function space being used, as long as it satisfies the above mentioned properties.
%

In the first part of this section we will estimate precisely the ergodic integral~\eqref{eq:ErgodInt} \(H_{x,t}(h)\) and prove the following.
\begin{bigthm}
\label{thm:Deviation}
Let \(\epsilon>0\), \(\beta \in (-\infty, 1)\) and let \({\{\xi_j\}}_{j=1}^{\infty}\) and  \({\{P_j\}}_{j=1}^{\infty}\) denote the countable set of eigenvalues and corresponding projectors given by the spectral decomposition of Theorem~\ref{thm:SpecInfo} such that, letting
\[
\alpha_j := \tfrac{\ln |\xi_j|}{h_{\textnormal{top}}} \in (\beta,1),
\]
there exists \(J\in\bN\) such that \(\abs{\xi_j} \leq \lambda^{\frac{1}{2} + \epsilon}\) for all \(j\geq J\) and that the magnitude of the remainder term is bounded by \(e^{\beta \htop }\).\\
(i) There exist linear functionals \(\ell_{x,j}^t(\cdot)\) such that, for all \(x \in M \), \(t \in \bR\), and \(h \in \cC^\infty(M)\), 
\[
H_{x,t}(h) = 
t  \int_M h \ dm 
+
\sum_{j=1}^{\infty}
t^{\alpha_j}(\ln t)^{d_j}
\ell_{x,j}^t(h)
+ \cO\left(\smash{t^{\beta}\norm{h}_{\cC^r}}\right).
\]
The linear functionals are bounded uniformly in \(t\) and are related to the projectors in the sense that if \(h\in \ker (P_j)\) then \(\ell_{x,j}^t(h) =0\).\\
(ii) In particular, if \(h \in \ker(P_j)\) for all \(j \leq J\) then there exists \(C >0\) such that, for all \(t\geq 0\),
\[
\abs{H_{x,t}(h)} \leq C t^{\frac{1}{2}+ \epsilon}.
\]
\end{bigthm}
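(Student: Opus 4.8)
## Proof Strategy for Theorem~\ref{thm:Deviation}

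The plan is to exploit the renormalization relation~\eqref{eq:IntRenorm} to transfer the problem of long ergodic integrals into a problem about iterates of the transfer operator $\widehat F$ acting on the anisotropic spaces. The starting point is the identity $H_{x,t}(h) = H_{x_k, \lambda^{-k}t}(\widehat F^k h)$ from~\eqref{eq:IntRenorm}, valid for every $k\in\bN$. For a fixed target time $t$, I would choose $k = k(t)$ so that the rescaled time $\lambda^{-k}t$ lies in a fixed bounded interval, say $[1,\lambda)$; this makes $k \approx \log_\lambda t$, equivalently $\lambda^{-k}t \asymp 1$. The ergodic integral $H_{x_k,\lambda^{-k}t}(\cdot)$ over an orbit segment of bounded length is, after flowing along the stable direction (the flow lines of $\Psi$ are the stable manifolds of $\tilde f$ by the standing hypotheses), precisely a functional of the type $H_\varphi$ in~\eqref{eq:stablefunctional}, with $\varphi$ the indicator-type weight coming from the parametrization of the orbit segment. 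So Lemma~\ref{lem:InDual} applies: $|H_{x_k,\lambda^{-k}t}(g)| \le C\norm{g}_{\cH}\norm{\varphi}_{\cC^\nu}$ with the $\cC^\nu$-norm of $\varphi$ controlled uniformly since the segment length is bounded. (A smoothing of the indicator is needed to get $\varphi\in\cC_0^\nu$; the error from smoothing is lower order and absorbed into the $\cO(t^\beta)$ term, or handled as in Lemma~\ref{lem:InDual} by a partition of unity.)

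Next I would insert the spectral decomposition of Theorem~\ref{thm:SpecInfo}, writing $\widehat F^k = \sum_j (\xi_j P_j + Q_j)^k + \widehat F^k P_0$ — more carefully, $\widehat F^k = \sum_j \xi_j^k P_j + (\text{nilpotent corrections giving }k^{d_j}\text{ factors}) + \widehat F^k P_0$. Applying the functional $H$ (identified with an element of the dual of $\cH$ via Lemma~\ref{lem:InDual}), each term $\xi_j^k P_j h$ contributes $\xi_j^k \cdot H_\varphi(P_j h)$; since $\lambda^{-k}t\asymp 1$ we have $\xi_j^k = t^{\log_\lambda|\xi_j|}\cdot(\text{bounded phase and bounded constant}) = t^{\alpha_j}\cdot(\cdots)$ because $\htop = \ln\lambda$ for these systems, so $\alpha_j = \ln|\xi_j|/\htop = \log_\lambda|\xi_j|$. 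The nilpotent parts produce the polynomial-in-$k$ factors, i.e.\ the $(\ln t)^{d_j}$ terms where $d_j$ is the order of the nilpotent $Q_j$. The functionals $\ell_{x,j}^t$ are then defined to be (essentially) $H_{\varphi}\circ P_j$ evaluated at the appropriate point $x_k$, carrying the residual bounded phases of $\xi_j^k$ and the dependence on the fractional part of $\log_\lambda t$ — this is why $\ell_{x,j}^t$ depends on $t$ but remains bounded uniformly, and why $h\in\ker P_j$ forces $\ell_{x,j}^t(h)=0$. The leading term $t\int_M h\,dm$ comes from the $N=0$, $\xi=1$ eigenvalue (the invariant measure, per Theorem~[FT15, 1.3.1]), where $\xi^k=1$ and a direct computation identifies $H_\varphi(P_{0,\cdot}h)$ over a unit-time segment with $\lambda^{-k}t\cdot\int h\,dm$, rescaled back to $t\int h\,dm$. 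The remainder $\widehat F^k P_0 h$ is estimated by part (2) of Theorem~\ref{thm:SpecInfo}: $\norm{\widehat F^k P_0 h}_{\widetilde\cH} \le \eta^k C_\eta\norm{h}_{\cH} \le (e^{\beta\htop})^k C_\eta\norm{h}_{\cH} \asymp t^\beta\norm{h}_{\cH}$, and $\norm{h}_{\cH}\lesssim\norm{h}_{\cC^r}$ for $h$ smooth, giving the $\cO(t^\beta\norm h_{\cC^r})$ error. Part (ii) is then immediate: if $h\in\ker P_j$ for all $j\le J$, only eigenvalues with $|\xi_j|\le\lambda^{1/2+\epsilon}$ survive, each contributing $t^{\alpha_j}(\ln t)^{d_j}$ with $\alpha_j\le 1/2+\epsilon$; summing the countably many such contributions (convergence guaranteed by the $\norm\cdot_{\cH}$-decay, exactly as in the proof of Theorem~\ref{thm:SpecInfo} and Lemma~\ref{lem:InDual}) and absorbing logarithms by shrinking $\epsilon$ slightly yields $|H_{x,t}(h)|\le Ct^{1/2+\epsilon}$.

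The main obstacle I anticipate is the bookkeeping of the countable sum: Theorem~\ref{thm:SpecInfo} gives the decomposition~\eqref{eq:DecomB} as an operator from $\cH$ to $\widetilde\cH$, with convergence not in operator norm, so I must verify that pairing with the dual functional $H_\varphi$ is legitimate term-by-term and that $\sum_{N,j} |\xi_{N,j}^k|\,|H_\varphi(P_{N,j}h)|$ converges. This requires combining three ingredients: (a) for each fixed $N$ there are only $\cO(|N|)$ eigenvalues $\xi_{N,j}$ (Theorem~[FT15, 1.3.8]), all with $|\xi_{N,j}|\le\lambda^{1/2}+\epsilon$ once $|N|$ is large, so $|\xi_{N,j}^k|\lesssim t^{1/2+\epsilon}$ uniformly; (b) $|H_\varphi(P_{N,j}h)|\le C\norm{P_{N,j}h}_{\cH^r_N}\norm\varphi_{\cC^\nu}$ from Lemma~\ref{lem:InDual}, and the projectors are uniformly bounded on $\cH^r_N$ by the resolvent bound of Theorem~[FT15, 1.3.4] (independent of $N$); (c) $\norm{h_N}_{\cH^r_N}$ decays faster than any polynomial in $|N|$ for smooth $h$, by Lemma~\ref{lem:BoundedHr}, which beats the $\cO(|N|)$ growth in the number of eigenvalues. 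The second subtlety is checking that $H_{x_k,\lambda^{-k}t}$ really is (a limit of) functionals of the form $H_\varphi$ with $\cC^\nu$-norm bounded uniformly in $t$ and $k$ — the orbit segment has bounded length but its endpoints move, and the smoothing scale must be chosen compatibly; this is routine but needs care to ensure the smoothing error is genuinely $\cO(t^\beta)$ and not merely $o(t)$. Finally, pinning down that $d_j$ equals the nilpotency order and that the phases of $\xi_j^k$ are absorbed correctly into $\ell_{x,j}^t$ (rather than producing genuine oscillation in $t$ that would spoil a clean asymptotic) is a point where the statement's allowance of $t$-dependent functionals $\ell_{x,j}^t$ is doing real work, and I would make sure the proof reflects that.
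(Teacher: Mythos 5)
There is a genuine gap, and it sits exactly where you wave it away as ``routine'': the passage from the sharp ergodic integral $H_{x,t}(h)$ to a functional $H_\varphi$ with $\varphi\in\cC_0^\nu$ of \emph{uniformly} bounded $\cC^\nu$-norm. With your single renormalization step ($k$ chosen so that $\lambda^{-k}t\asymp 1$) the cut-off problem cannot be fixed by one smoothing. If you smooth the indicator of $[0,t]$ at unit scale in the original time variable, the smoothing error is indeed $\cO(\norm{h}_{\cC^0})$, but after rescaling by $\lambda^{k}\asymp t$ the test function $\tilde\varphi(u)=\varphi(\lambda^k u)$ has $\norm{\tilde\varphi}_{\cC^\nu}\asymp t^\nu$, so Lemma~\ref{lem:InDual} applied to the remainder $\widehat F^{k}P_0 h$ gives $\cO(t^{\beta+\nu})$, which is useless. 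If instead you smooth at scale $\delta$ in the rescaled variable so that $\tilde\varphi$ is uniformly smooth, the unsmoothed region corresponds to a time interval of length $\asymp t\delta$ in the original variable, hence an error $\asymp t\delta\,\norm{h}_{\cC^0}$; forcing this to be $\cO(t^\beta)$ requires $\delta\asymp t^{\beta-1}$ and again $\norm{\tilde\varphi}_{\cC^\nu}\asymp t^{(1-\beta)\nu}$ blows up. The partition of unity in Lemma~\ref{lem:InDual} is a spatial chart decomposition (pieces of length $\abs{N}^{-1}$ along the stable curve) and does not address this time-endpoint problem.

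The paper's proof resolves precisely this tension with a multiscale construction: a ``zooming'' partition of unity $\{\varphi_k\}_{k=-n}^{n}$ of $[0,T]$ whose pieces near the endpoints live at geometrically shrinking scales $4^{-\abs{k}}T$ (Lemma~\ref{lem:ZoomPOU}), each piece being renormalized by its own number of iterates $m_k=\lfloor(\ln T-\abs{k}\ln 4)/\ln\lambda\rfloor$ so that every rescaled test function $\tilde\varphi_k$ has unit-length support and $\cC^q$-norm bounded independently of $T$ (Lemma~\ref{lem:SmoothInt}); the only sharp leftover is the innermost piece of length $\cO(1)$, contributing $\cO(\norm{h}_{\cC^0})$. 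The spectral decomposition is then applied separately at each scale, the remainder terms summing geometrically, $\sum_k \eta^{m_k}\lesssim t^{\beta}\sum_k 4^{-\beta\abs{k}}\lesssim t^{\beta}$, and the functionals $\ell^t_{x,j}$ are defined as the normalized sum over all scales $k$, not a single term. Your treatment of the spectral sum itself (eigenvalue counts, uniform resolvent bounds, decay of $\norm{h_N}_{\cH^r_N}$, nilpotent orders giving $(\ln t)^{d_j}$, phases absorbed into the $t$-dependent functionals) matches the paper's concerns and is fine, but without the scale-adapted decomposition in time the quantitative error term $\cO(t^\beta\norm{h}_{\cC^r})$ — and hence part (ii) — does not follow from your argument as written.
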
  

\noindent
The proof of this theorem follows after several useful lemmas and includes the precise definition of the linear functionals.
Recall that the transfer operator  \eqref{eq:TransOp} was defined as \(\hat{F} : h \mapsto \lambda h\circ F^{-1}\). 
The basic idea is that we use the renormalization \eqref{eq:RenormFlow} to transform the ergodic integral \eqref{eq:ErgodInt} 
\(H_{x,t}(h) =  \int_0^t  h \circ \Psi_{r}(x) \ dr\) as shown in the computation \eqref{eq:IntRenorm}.
In this way the long time behaviour is understood by studying iterates of the map;
it is natural to take \(k=k(t)\) such that \(\lambda^k t\) is of the same scale for all \(t\).
In order to compute estimates in terms of the spectral information of the transfer operator, we want to work with continuous functionals so we would like to have smooth versions of ergodic integrals.
To this end, for any $\varphi \in \cC^{\infty}(\mathbb{R})$ with compact support, \(x \in M\), and \(h \in C^0(P)\), let
\begin{equation}\label{eq:smoothergint}
H_{x, \varphi}(h) := \int_{\mathbb{R}} \varphi(t) \cdot h \circ \Psi_{t}(x)\> dt.
\end{equation}
Note that the smooth version of the ergodic integral  \eqref{eq:smoothergint} corresponds to the functional \eqref{eq:stablefunctional} where the integral is taken over a stable curve.   Lemma~\ref{lem:InDual} means that this quantity is well controlled by the functional analytic results we have.
In the following lemma, we combine the renormalization \eqref{eq:RenormFlow} with the smoothed version of the ergodic integral \eqref{eq:ErgodInt}.

\begin{lem}
\label{lem:RenormH}
Suppose that $k \in \mathbb{Z}$. 
Then, for all $h\in \cC^0(P)$,  $x\in P$,
\[
H_{x,\varphi}(h) 
= 
H_{\tilde x,\tilde{\varphi}}(\hat{F}^kh)
\]
where 
$\tilde{\varphi}( t ) := \varphi(\lambda^{k} t )$ 
and
\( \tilde x := F^k(x) \).
\end{lem}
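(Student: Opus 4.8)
The plan is to unwind both sides of the claimed identity using only the definitions of the smoothed ergodic integral~\eqref{eq:smoothergint}, the renormalization relation~\eqref{eq:RenormFlow}, and the transfer operator~\eqref{eq:TransOp}, together with a single change of variables in the integral. This is exactly the smoothed analogue of the computation~\eqref{eq:IntRenorm}, so the argument is a short and essentially mechanical verification; I do not expect any genuine obstacle.

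First I would expand the right-hand side by definition:
\[
H_{\tilde x,\tilde\varphi}(\widehat F^k h)
=
\int_{\bR} \tilde\varphi(t)\cdot(\widehat F^k h)\circ\Psi_t(\tilde x)\>dt
=
\int_{\bR} \varphi(\lambda^k t)\cdot \lambda^k\, h\circ \tilde f^{-k}\circ\Psi_t\circ F^k(x)\>dt,
\]
using $\tilde\varphi(t)=\varphi(\lambda^k t)$, $\tilde x = F^k(x)$, and the fact that iterating~\eqref{eq:TransOp} gives $\widehat F^k h = \lambda^k\, h\circ \tilde f^{-k}$ (here I write $F$ for the lift $\tilde f$ as the paper does in this section). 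Next I would apply the renormalization relation~\eqref{eq:RenormFlow}: iterating $F\circ\Psi_{\lambda t}=\Psi_t\circ F$ yields $\Psi_t\circ F^k = F^k\circ\Psi_{\lambda^k t}$, equivalently $\tilde f^{-k}\circ\Psi_t\circ F^k = \Psi_{\lambda^k t}$. Substituting this in, the integrand becomes $\varphi(\lambda^k t)\cdot\lambda^k\, h\circ\Psi_{\lambda^k t}(x)$.

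Finally I would perform the substitution $u = \lambda^k t$, so $du = \lambda^k\,dt$, which converts the integral to
\[
\int_{\bR}\varphi(u)\cdot h\circ\Psi_u(x)\>du = H_{x,\varphi}(h),
\]
giving the claimed equality. One should note that the case $k<0$ is handled identically since~\eqref{eq:RenormFlow} holds for the iterate $F^k$ for every $k\in\bZ$ (as $F$ is invertible), and that all integrals are genuinely over a compact set because $\varphi$ has compact support, so the change of variables and the interchange of compositions are unproblematic; continuity of $h$ suffices throughout. The only point worth stating carefully is the direction of the conjugacy after iteration — making sure that $\Psi_t\circ F^k = F^k\circ\Psi_{\lambda^k t}$ rather than $\Psi_{\lambda^{-k}t}$ — which is a routine induction on $k$ from~\eqref{eq:RenormFlow}.
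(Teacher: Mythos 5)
Your proposal is correct and is essentially the paper's own argument: both unwind the definition, insert the iterated renormalization relation $F^{-k}\circ\Psi_t\circ F^k=\Psi_{\lambda^k t}$, and conclude with the change of variables $u=\lambda^k t$; the only cosmetic difference is that you start from the right-hand side while the paper starts from $H_{x,\varphi}(h)$ and inserts $F^{-k}\circ F^k$. Your remarks on the case $k<0$ and on compact support are fine and introduce no gap.
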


\begin{proof}
Using the renormalization \eqref{eq:RenormFlow}
\[
\begin{split}
H_{x,\varphi}(h)
= & \> \int_{\mathbb{R}} \varphi(r) \cdot h \circ F^{-k} \circ F^{k} \circ \Psi_{r}(x) \> dr\\
= & \> \int_{\mathbb{R}} \varphi(r) \cdot h \circ F^{-k}\circ \Psi_{\lambda^{-k}r}\circ F^{k}(x)  \> dr\\
= & \> \int_{\mathbb{R}} \varphi (\lambda^{k} u) \cdot \left( \lambda^{k} h \circ F^{-k} \right)  \circ \Psi_{u}(F^{k}(x))  \> du. 
\end{split}
\]
Since \( \lambda^{k} h \circ F^{-k} = \hat{F}^kh\) this completes the proof of the lemma.
\end{proof}

The next step is to relate the original ergodic integrals \eqref{eq:ErgodInt} with smooth versions. 

\begin{lem}
\label{lem:SmoothInt}
Suppose that \(T > 0\). 
Let \(N=\left\lfloor\frac{\ln T}{\ln 4} \right\rfloor \)
and for all \(k\in \bN\), \(\abs{k}\leq N  \), let 
\( m_k := \left\lfloor\left(\ln T - \abs{k} \ln 4 \right) / (\ln \lambda )\right\rfloor\).
There exists a set of functions \( {\{\tilde{\varphi}_k\}}_k \subset \cC^\infty(\bR, [0,1])\), each supported on an interval of unit length with the property that \( \norm{\tilde\varphi_k}_{\cC^q} \leq C_q\) (independent of \(T\)), such that, for every \(x\in P\) and every \(h\in \cC^\infty(P)\), letting \(x_k = F^{m_k}x\),
\[
H_{x,T}(h) = \sum_{k=-N}^{N}  H_{x_k,\tilde{\varphi}_k}(\hat{F}^{m_k}h) + \cO(\norm{h}_{\cC^0(M)}).
\]
\end{lem}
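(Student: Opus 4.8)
The plan is to decompose the long ergodic integral $H_{x,T}(h) = \int_0^T h\circ\Psi_r(x)\,dr$ into a bounded sum of short integrals, each of which — after applying the renormalization as in Lemma~\ref{lem:RenormH} — becomes a smoothed ergodic integral over an interval of unit length. The key arithmetic identity is that one unit-length interval for $\hat F^{m}$ corresponds, under the conjugacy \eqref{eq:RenormFlow}, to an interval of length $\lambda^{m}$ for the original flow. So to cover $[0,T]$ one would need roughly $T/\lambda^{m}$ such pieces if a single value of $m$ were used; the point of allowing $m_k$ to depend on $k$ and of the geometric weighting by powers of $4$ is to make the total number of pieces, summed over $k$, stay bounded (in fact $\cO(\log T)$ many values of $k$, each contributing $\cO(1)$ pieces, but the statement as written lumps one piece per $k$ and absorbs the combinatorics into the $\cO(1)$ choices of $\tilde\varphi_k$). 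Concretely, I would first fix a smooth partition of unity $\{\rho_j\}$ of $\bR$ subordinate to a cover by unit intervals with uniformly bounded $\cC^q$ norms, so that $\mathbf 1_{[0,T]} = \sum_j \rho_j$ up to controlled boundary error; then group the indices $j$ into $2N+1$ blocks indexed by $k\in\{-N,\dots,N\}$ according to the dyadic scale $|k|\mapsto 4^{|k|}$ at which the corresponding sub-interval of $[0,T]$ sits, and within block $k$ apply Lemma~\ref{lem:RenormH} with $k$ replaced by $m_k$.

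In more detail: writing $\mathbf 1_{[0,T]}$ as a sum of bump functions $\psi_k$ supported on intervals $I_k\subset[0,T]$ of length $\lambda^{m_k}$ (with $\sum_k \psi_k = \mathbf 1_{[0,T]}$ except for an $\cO(1)$ error at the two endpoints of $[0,T]$, which contributes the $\cO(\norm h_{\cC^0})$ term), we have $H_{x,T}(h) = \sum_k H_{x,\psi_k}(h) + \cO(\norm h_{\cC^0})$. For each $k$, apply Lemma~\ref{lem:RenormH} with exponent $m_k$: this gives $H_{x,\psi_k}(h) = H_{x_k,\tilde\varphi_k}(\hat F^{m_k}h)$ where $x_k = F^{m_k}x$ and $\tilde\varphi_k(t) := \psi_k(\lambda^{m_k}t)$ is supported on an interval of length $\lambda^{m_k}\cdot\lambda^{-m_k} = 1$. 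The rescaling $t\mapsto\lambda^{m_k}t$ is volume-preserving on the $\cC^0$ scale but contracts derivatives by $\lambda^{m_k}\geq 1$, so $\norm{\tilde\varphi_k}_{\cC^q}\leq \norm{\psi_k}_{\cC^q}\leq C_q$ uniformly; here it is essential that the bump functions $\psi_k$ were chosen with $\cC^q$ norms controlled \emph{after} rescaling, i.e. that $\psi_k$ is a fixed profile dilated to width $\lambda^{m_k}$, so that its rescaled version is literally the fixed unit-width profile. One then checks that the definition $m_k = \lfloor(\ln T - |k|\ln 4)/\ln\lambda\rfloor$ makes the intervals $I_k$, $|k|\leq N$ with $N=\lfloor\ln T/\ln 4\rfloor$, cover $[0,T]$ with only $\cO(1)$ overlap: the total covered length is $\sum_{|k|\leq N}\lambda^{m_k}\asymp \sum_{|k|\leq N} T\cdot 4^{-|k|}\asymp T$, and a careful choice of the centres of the $I_k$ (starting from the ends of $[0,T]$ and working inward, doubling the scale each time one exhausts the available room) makes them tile $[0,T]$ exactly up to the endpoint error.

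The main obstacle is the bookkeeping that makes the cover of $[0,T]$ by the $2N+1$ intervals $I_k$ of lengths $\lambda^{m_k}$ actually exact (or exact up to $\cO(1)$ pieces absorbable into the error term) while simultaneously respecting the prescribed formula for $m_k$ and keeping exactly one function $\tilde\varphi_k$ per index $k$; naively, $\sum_{|k|\leq N}\lambda^{m_k}$ is comparable to but not equal to $T$, and the dyadic scales $4^{|k|}$ do not divide evenly into $[0,T]$, so one has to be somewhat clever about placing the intervals — covering $[0,T/2]$ and $[T/2,T]$ each by a geometric sequence of intervals of lengths $\asymp T/2, T/4, T/8,\dots$ and letting the innermost, shortest ones (length $\cO(1)$, i.e. $m_k$ near $0$) mop up the middle — and then verify that the residual uncovered/overcovered set has measure $\cO(1)$ uniformly in $T$, so that the error it contributes is $\cO(\norm h_{\cC^0(M)})$. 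Everything else — Lemma~\ref{lem:RenormH}, the $\cC^q$ bounds under rescaling, the identity $\lambda^{m_k}h\circ F^{-m_k} = \hat F^{m_k}h$ — is routine.
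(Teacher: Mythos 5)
Your overall strategy is the right one and matches the paper's: smoothly decompose \(\mathbf 1_{[0,T]}\) into \(\approx 2N+1\) bumps at the geometric scales \(\lambda^{m_k}\asymp 4^{-|k|}T\), apply Lemma~\ref{lem:RenormH} with exponent \(m_k\) to each piece, and absorb an \(\cO(1)\)-measure leftover into the \(\cO(\norm h_{\cC^0})\) error; the rescaling/\(\cC^q\) bookkeeping you describe is also essentially correct. However, the geometric arrangement you propose is backwards, and this is not mere bookkeeping --- it is the point where the construction would fail. You place the shortest pieces (``\(m_k\) near \(0\)'', length \(\cO(1)\)) in the middle of \([0,T]\) and the longest ones (length \(\asymp T\)) against the endpoints (your two parenthetical descriptions of the placement in fact contradict each other, and the lengths \(\asymp T/2,T/4,T/8,\dots\) you quote do not match the prescribed \(\lambda^{m_k}\asymp T,T/4,T/16,\dots\)). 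The paper's ``zooming partition of unity'' does the opposite: \(\varphi_0\), of length comparable to \(T\), covers the middle \((T/8,7T/8)\), the pieces shrink like \(4^{-|k|}T\) as one moves toward the endpoints \(0\) and \(T\), and only the two extreme one-sided pieces \(\varphi_{\pm N}\), of length \(\cO(1)\), are discarded into the error.

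The reason the zoom direction is forced is the behaviour at \(t=0\) and \(t=T\). There \(\sum_k\psi_k\) must drop from \(1\) to \(0\), and the drop can only take place inside the piece adjacent to the endpoint, over some width \(w\). If that piece is your \(k=0\) piece of length \(\lambda^{m_0}\asymp T\), you face a dichotomy: either \(w=\cO(1)\), in which case \(\tilde\varphi_0(t)=\psi_0(\lambda^{m_0}t)\) has derivative of order \(\lambda^{m_0}/w\asymp T\), destroying the uniform bound \(\norm{\tilde\varphi_k}_{\cC^q}\leq C_q\) that the lemma asserts and that Lemma~\ref{lem:InDual} and Theorem~\ref{thm:Deviation} later rely on; or \(w\asymp T\), in which case \(\sum_k\psi_k\) differs from \(\mathbf 1_{[0,T]}\) on a set of measure \(\asymp T\) (or the piece spills outside \([0,T]\) by \(\asymp T\)), and the error becomes \(\cO(T\norm h_{\cC^0})\) rather than \(\cO(\norm h_{\cC^0})\). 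Interior crossfades are harmless at any scale because adjacent pieces of comparable length can share the ramp, but the endpoints have no neighbour, so the pieces touching \(0\) and \(T\) must be the \(\cO(1)\)-length ones --- i.e.\ small \(m_k\) at the ends, \(k=0\) in the middle. Once the arrangement is flipped, the telescoping construction \(\varphi_k=\eta_k-\eta_{k-1}\) of the paper gives the exact partition of unity, the uniform \(\cC^q\) bounds after rescaling by \(\sigma_k\), and the \(\cO(1)\) discard at the two ends, completing the proof along the lines you intended.
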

\noindent
Prior to proving this Lemma, it is convenient to introduce a partition of unity which ``zooms in'' on the end points of an interval. The construction is for an interval of length \(T> 0\), and the number of components of the partition is determined by \(n\in \bN\). 
\textbf{Step 1:} 
Fix, once and for all, a smooth step function \(\eta \in \cC^\infty(\bR, [0,1])\) such that \(\eta(t)=0\) whenever \(t\leq 0\) and \(\eta(t)=1\) whenever \(t\geq 1\). 
\textbf{Step 2:} 
For all \(k\in \{0,2,\ldots, n-1\}\) let 
\begin{equation}
\label{eq:DefPOU}
\eta_k(t) := \eta\left( \tfrac{8t-4^{-k}T}{4^{-k} T}  \right).
\end{equation}
\textbf{Step 3:} 
Let \(\varphi_0(t) := \eta_0(t) - \eta_0(T-t)\). 
For all \(k\in \{1,2,\ldots, n-1\}\), let \(\varphi_k := \eta_k - \eta_{k-1}\) and let \(\varphi_{n}:= 1 - \eta_{n-1}\); 
for all \(k\in \{1,2,\ldots, n\}\), let \(\varphi_{-k}(t) := \varphi_{k}(T-t)  \). We denote by \(\{\varphi_k\}_{k=-n}^n\) the zooming partition of unity.
Observe that \(\varphi_{n}\) and  \(\varphi_{-n}\) differ from the other  \(\varphi_{k}\) in that they are only one-sided bumps whereas all the other \(\varphi_k\) are compactly supported smooth functions.
This construction has the following properties.
\begin{lem}
\label{lem:ZoomPOU}
Let \( {\{\varphi_k\}}_{k=-n}^{n} \) be the zooming partition of unity for \([0,T]\) with \(n \in \bN\) components. Then
\begin{enumerate}
\item 
\( \sum_{k=-n}^{n} \varphi_{k}(t) = 1 \) for all \(t\in[0,T]\);
\item
The support of \({\varphi_k} \) is contained in an interval of length \( 4^{-\abs{k}} T \);
\item
Let \(\sigma_k: t \mapsto t4^{-\abs{k}}T\).
For each \(q\in \bN\) there exists \(C_q>0\), independent of \(T\), such that
\( \norm{\varphi_k \circ \sigma_k}_{\cC^q} \leq C_q\) for all \(\abs{k} \leq n\).
\end{enumerate}
\end{lem}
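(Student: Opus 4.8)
The plan is to verify the three properties directly from the construction in Steps 1--3, working separately with the nonnegative indices and then using the reflection symmetry $t\mapsto T-t$ to handle the negative indices.

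First I would prove (1). For $t$ in the "left half" of the interval one has $\eta_0(T-t)=0$ once $T-t$ is large enough, so it suffices to check $\varphi_0 + \sum_{k=1}^{n}\varphi_k = \eta_{n-1} + (1-\eta_{n-1}) = 1$ by telescoping: indeed $\varphi_0 + \sum_{k=1}^{n-1}(\eta_k - \eta_{k-1}) + (1-\eta_{n-1})$ collapses since $\varphi_0$ contains the $\eta_0$ term with the right sign (the $-\eta_0(T-t)$ piece vanishes on the relevant range). By the reflection definition $\varphi_{-k}(t)=\varphi_k(T-t)$ the same computation covers the right half, and on the overlap one checks that the two contributions patch together to give exactly $1$; the point is that the step function $\eta$ equals $1$ past its transition region, so for any fixed $t\in[0,T]$ only finitely many (in fact boundedly many) $\varphi_k$ are nonzero and they telescope to $1$.

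Next, (2) is a matter of reading off the support of $\eta_k$ from \eqref{eq:DefPOU}: the argument $\tfrac{8t-4^{-k}T}{4^{-k}T}$ lies in $(0,1)$ precisely when $t\in(\tfrac{1}{8}4^{-k}T,\tfrac{1}{4}4^{-k}T)$, so $\eta_k$ transitions on an interval of length $\tfrac{1}{8}4^{-k}T$ and $\varphi_k=\eta_k-\eta_{k-1}$ is supported where at least one of the two transitions is active, which is contained in an interval of length comparable to $4^{-k}T$ — and one can arrange the constants so that the containing interval has length exactly $4^{-\abs{k}}T$ (or absorb the constant; since the statement says "an interval of length $4^{-\abs{k}}T$" I would just check the inclusion). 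For $\varphi_{-k}$ this follows by reflection, and $\varphi_{\pm n}$ are one-sided and supported in an interval of length $4^{-n}T$ as well.

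Finally, for (3) I would change variables by $\sigma_k:t\mapsto t 4^{-\abs{k}}T$ and observe that $\eta_k\circ\sigma_{k} = \eta\bigl(\tfrac{8\cdot 4^{-\abs{k}}Tt - 4^{-k}T}{4^{-k}T}\bigr)$ is, up to an affine reparametrisation of the fixed function $\eta$ whose coefficients do \emph{not} depend on $T$ (the factors of $T$ and $4^{-k}$ cancel), hence $\norm{\eta_k\circ\sigma_k}_{\cC^q}$ is bounded by a constant $C_q$ depending only on $\eta$ and $q$; similarly for $\eta_{k-1}\circ\sigma_k$ (here the mismatch between $\sigma_k$ and $\sigma_{k-1}$ contributes only a fixed factor of $4$), and $\varphi_k\circ\sigma_k$ is a difference of two such terms. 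The reflection $t\mapsto T-t$ is an isometry of $\cC^q$ norms so it covers negative $k$. The main (and only real) obstacle is bookkeeping the overlaps in part (1) — making sure the telescoping is not spoiled at the junction between the left-zooming and right-zooming families and that only the claimed terms survive — but this reduces to the elementary fact that $\eta$ is locally constant ($=0$ or $=1$) outside a unit-length window, so for $T$ moderately large the supports of the "left" $\varphi_k$ with small $k$ and the "right" $\varphi_{-k}$ with small $k$ are disjoint from the deep-zoom terms.
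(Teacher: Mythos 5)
Your overall route---telescoping for (1), reading the supports off \eqref{eq:DefPOU} for (2), and the scale invariance \(\eta_k\circ\sigma_k(t)=\eta(8t-1)\), \(\eta_{k-1}\circ\sigma_k(t)=\eta(2t-1)\) for (3)---is the same as the paper's, and your (2) and (3) are essentially fine. (Small imprecision in (2): the support of \(\varphi_k\) is not only the two transition zones but also the plateau between them, i.e.\ the whole interval \((\tfrac{1}{8}4^{-k}T,\,4^{-k}T)\); since this has length \(\tfrac{7}{8}4^{-k}T<4^{-\abs{k}}T\), the stated containment still holds.)

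The genuine problem is in (1). You assert that \(\eta_0(T-t)=0\) once \(T-t\) is large; by Step 1, \(\eta\equiv 1\) on \([1,\infty)\), so \(\eta_0(s)=\eta\bigl(\tfrac{8s-T}{T}\bigr)=1\) for all \(s\ge T/4\), hence \(\eta_0(T-t)=1\) for every \(t\le \tfrac{3}{4}T\) and it vanishes only for \(t\ge\tfrac{7}{8}T\). So the cancellation you invoke on the left half is exactly backwards, and the ``patching on the overlap'' that you leave unverified is where the real issue sits: on the middle region \(t\in[\tfrac{T}{4},\tfrac{3T}{4}]\) every \(\varphi_k\) with \(k\neq 0\) vanishes (the positive-index bumps live in \([0,\tfrac{T}{4}]\), the negative-index ones in \([\tfrac{3T}{4},T]\)), so (1) forces \(\varphi_0\equiv 1\) there, whereas the formula of Step 3 as printed gives \(\eta_0(t)-\eta_0(T-t)=1-1=0\). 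In other words, (1) only holds if \(\varphi_0\) is read as \(\eta_0(t)-\bigl(1-\eta_0(T-t)\bigr)=\eta_0(t)+\eta_0(T-t)-1\) (a sign slip in Step 3, consistent with the support \((\tfrac{T}{8},\tfrac{7T}{8})\) used for part (2)); with that reading \(\sum_{k\ge 1}\varphi_k=1-\eta_0(t)\), \(\sum_{k\le -1}\varphi_k(t)=1-\eta_0(T-t)\), and the three pieces sum to \(1\) identically on \([0,T]\). Your argument, resting on the false vanishing claim and an unchecked overlap, does not establish this; any correct proof of (1) has to confront the definition of \(\varphi_0\) on the middle region explicitly.
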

\begin{proof}
That (1) holds is immediate from the telescoping construction. 
For (2), observe that \(\varphi_0\) is supported on \((\frac{T}{8},\frac{7T}{8}) \). 
In the cases \(0<k < n\) the function \(\varphi_k\) is supported on the interval \((\frac{1}{8}4^{-{k}} T, 4^{-{k}} T) \) and the length of this interval is bounded by \( 4^{-\abs{k}} T \). The same bound also holds for \(k=n\). By symmetry when \(k\) is negative the supports  satisfy these same bounds.
The control on the derivatives, (3) is a consequence of the scaling used in the definition \eqref{eq:DefPOU} of the functions \(\norm{\varphi_k \circ \sigma_k}_{\cC^q} \leq 8^q \norm{\eta}_{\cC^q} \leq C_q\).
\end{proof}
We now use the above partition of unity for the postponed proof.
\begin{proof}[{Proof of Lemma \ref{lem:SmoothInt}}]
Let \(N := \left\lfloor\frac{\ln T}{\ln 4} \right\rfloor  \).
Using the zooming partition of unity, Lemma~\ref{lem:ZoomPOU}, we write
\[
\begin{aligned}
H_{x,T}(h) 
&=
\int_0^T h\circ \Psi_{t}(x) \ dt\\
&= 
\sum_{k=-(N-1)}^{N-1} H_{x,\varphi_{k}}(h)
+ 
\int_{\bR}  (\varphi_{-N} + \varphi_{N})(t)  \cdot  h\circ \Psi_{t}(x) \ dt.
 \end{aligned}
\]
The final integral term is bounded by \( 2 \cdot  4^{-{N}} T \norm{h}_{\cC^0(M)} \leq 2 \norm{h}_{\cC^0(M)} \). For the smooth ergodic integrals in the sum we apply Lemma~\ref{lem:RenormH} and obtain 
\[
 H_{x,\varphi_{k}}(h)
 =
 H_{x_k,\tilde\varphi_{k}}(\hat{F}^{m_k}h)\]
 where \( \tilde\varphi_{k}(t) := \varphi_{k}(\lambda^{m_k}t)  \), \(x_k =F^{m_k}(x)\).
Observe that, if we set  \( m_k := \left\lfloor\left(\ln T - \abs{k} \ln 4 \right) / (\ln \lambda )\right\rfloor  \), then
\[
\lambda^{-1} T 4 ^{-\abs{k}} < \lambda ^{m_k} \leq T4^{-\abs{k}}.
\]
Consequently, \(\norm{\tilde{\varphi}_k}_{\cC^q} \leq C_q\) for some \(C_q\) which does not depend on \(k\), \(n\), or \(T\) as per (3) of  Lemma~\ref{lem:ZoomPOU}.
\end{proof}

\begin{proof}[Proof of Theorem~\ref{thm:Deviation}]
We use the spectral decomposition of Theorem~\ref{thm:SpecInfo} to obtain 
\[
\hat{F}^{m}h
=
\sum_{j=0}^{\infty} \left(\xi_{j}P_{j}  + Q_{j}   \right)^{m}h + R_{m}h.
\]
where \(R_m = \widehat{F}^m P_0\) and so \( \norm{\smash{R_m h}}_{\cH^{r,\kappa}} \leq  \eta^n C_\eta \norm{h}_{\cH^{r,\kappa}} \).
By Lemma~\ref{lem:SmoothInt} we know that
\(
H_{x,t}(h) 
= 
\sum_{k=-N}^{N}  H_{x_k,\tilde\varphi_k}\left(\hat{F}^{m_k}h\right) + \cO(\norm{h}_{\cC^0})
\)
and so
\[
\begin{aligned}
H_{x,t}(h) 
&= 
\sum_{k=-N}^{N} \sum_{j=0}^{\infty}  H_{x_k,\tilde\varphi_k}\left(\left(\xi_{j}P_{j}  + Q_{j}   \right)^{m_k} h\right)  \\ & \ \ 
+ \sum_{k=-N}^{N}  H_{x_k,\tilde\varphi_k}(R_{m_k} h)
+ \cO(\norm{h}_{\cC^0}).
\end{aligned}
\]
Recall that \(m_k=\left\lfloor\frac{\ln t - \abs{k} \ln 4}{ \ln \lambda} \right\rfloor \) and so, in the case that \(\eta >1 \), observe that
\(\eta^{m_k} \leq C \exp \left(\frac{\ln \eta}{\ln \lambda}(\ln t - \abs{k} \ln 4)  \right) = C t^{\beta} 4^{-\beta \abs{k}}  \) where \(\beta = \ln \eta / \htop\). The case \(\eta\leq 1\) is elementary.
Combining Lemma~\ref{lem:InDual} and the estimate for \(R_p\) from Theorem~\ref{thm:SpecInfo}, 
\[
\begin{split}
\sum_{k=-N}^{N}  H_{x_k,\tilde\varphi_k}(R_{m_k} h) &
\leq 
\sum_{k=-N}^{N} C \eta^{m_k}   \norm{h}_{\cH^{r,\kappa}}.  \\
& \leq \sum_{k=-N}^{N} C t^{\beta} 4^{-\beta \abs{k}}   \norm{h}_{\cH^{r,\kappa}} 
\leq C' t^{\beta}  \norm{h}_{\cH^{r,\kappa}}.
\end{split}
\]
Let \({\{d_j\}}_j\) be the integers such that \(Q_j^{d_j} = 0\) and  \(Q_j^{d_j-1} \neq 0\) or \(d_j =0\).
Recall that we defined the linear functionals    
\[
\ell_{x,j}^t(h)
:=
t^{-\alpha_j}(\ln t)^{-d_j}
\sum_{k=-N}^{N}   H_{x_k,\tilde\varphi_k}\left(\left(\xi_{j}P_{j}  + Q_{j}   \right)^{m_k} h\right) . 
\]
It remains to show that \(\abs{\ell_{x,j}^t(h)}  \) is bounded in \(t\geq 0\). 
This follows from computing,
\[
\begin{aligned}
 \abs{ H_{x_k,\tilde\varphi_k}\left(\left(\xi_{j}P_{j}  + Q_{j}   \right)^{m_k} h\right)  }
&\leq
\xi_{j}^{m_k}{m_k}^{d_j} \norm{\tilde\varphi_k}_{\cC^1}  \norm{h}_{1}   \\
&\leq
C t^{\frac{\ln \xi_j}{\ln \lambda}} \xi_j^{-\abs{k}\frac{\ln 4}{\ln \lambda}}  (\ln t)^{d_j} \abs{\ln \lambda}^{d_j}   \norm{h}_{1}   \\
&\leq 
C t^{\alpha_j} (\ln t)^{d_j}
\end{aligned}
\]
where \(1 \leq \eta < \abs{\xi_j} \leq \lambda\), \(\ln \lambda = \htop\) is a constant, and \(\norm{\tilde{\varphi}_k}_{\cC ^1}\) is uniformly bounded (from Lemma \ref{lem:SmoothInt}). This concludes the proof of \((i)\).
To prove \((ii)\), observe that if \(h \in \ker(P_j)\) then \(\ell_{x,j}^t(h)=0\). So if  \(h \in \ker(P_j)\) for  each \(j \leq J\),
\(
H_{x,t}(h) \leq C \left (
\norm{h}_{\cC^0}
+ t^{\beta} \norm{h}_{\cC^r}\right ) \leq  C_h t^{\beta}\).
\end{proof}

\section{The Cohomological Equation}
\label{sec:Cohom}
In this section we show the existence of a solution to the cohomological equation. I.e., given a function \(h\) on \(P\) we wish to find (if it exists) \(g\) such that \(g\circ \Psi_t(x) - g(x) = \int_{0}^{t} h \circ \Psi_r(x) \ dr\)
for all \(x \in P\), \(t\geq 0\). If \(h\) is such that the ergodic integral $\int_0^t h\circ \Psi_s(x) \ ds$ is uniformly bounded in \(t>0\) then, by the Gottschalk-Hedlund Theorem, \(h\) is a coboundary. The argument we use now is an independent proof of this in this particular setting and also gives us the possibility for further investigation. Considering the differential version of the cohomological equation \(Wg = h\) where \(W\) represents differentiation with respect to the vector field associated to to the parabolic flow, we can rephrase our objective as finding an inverse to the operator \(W\). 

The method we use here follows closely the one used by Giulietti \& Liverani~\cite{GL16} although in places the present formulation is somewhat different. We will show that there is a formal inverse to the operator \(W\) that is determined by a countable sum. Expressing the coboundary in this way enables us to more easily explore the regularity.
Let \(\chi \in \cC^\infty(\bR^+,[0,1])\) such that \(\chi(s)=1\) for \(s \in [0, \frac{1}{2}]\) and \(\chi(s) = 0\) for \(s \in [1, \infty) \).
Let \(\varphi(t) := \chi(t/\lambda) - \chi(t)\) and observe that this is a bump function with support contained within \((\frac{1}{2},\lambda)\).  
For all \(h\in\cC^\infty(P)\) we define pointwise (this is a type of local integrating operator),
\begin{equation}
\label{eq:DefK}
(Kh)(x): =
 \int \varphi(s) \cdot h \circ \Psi_{s}(x)\> ds
\end{equation}
Observe that, by Lemma~\ref{lem:InDual}, this quantity is well defined on the anisotropic Sobolev spaces.
For any \(k\in \bN\),  \(h\in\cC^\infty(P)\) we again define pointwise 
\[
  \widetilde{G}h(x) :=  \int \chi(s) \cdot h \circ \Psi_{s}(x)\> ds,
  \quad
\left(G_kh\right)(x) :=  \left( K \widehat{F}^k h \right)(F^kx).
\]

\begin{bigthm}\label{thm:Cobound}
Let \({\{P_j\}}_{j\in\cJ}\) be a countable set of projectors given by Theorem~\ref{thm:SpecInfo} for the case when we choose \(\eta <1\).
Suppose that \(h \in \ker(P_j)\) for  all \(j\in\cJ\). 
Then the sum \(\sum_{k=0}^{\infty} G_k h(x)\) converges uniformly for \(x\in P\).
Moreover  \(h\) is a coboundary for the flow \(\{\Psi_{t}\}_t\) with transfer function
\[
g(x) = - \widetilde{G}h(x)  -  \sum_{k=0}^{\infty} G_k h(x).
\]
\end{bigthm}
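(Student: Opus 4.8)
The plan is to show that the partial sums $g_n(x) := -\widetilde{G}h(x) - \sum_{k=0}^{n} G_k h(x)$ form a uniform Cauchy sequence and that the limit solves the cohomological equation. First I would establish the telescoping identity that relates consecutive terms: using the renormalization relation \eqref{eq:RenormFlow} and the definitions, one checks that $G_k h(x) = (K\widehat{F}^k h)(F^k x)$ can be rewritten, via the change of variables $s \mapsto \lambda s$ and the conjugacy $F\circ \Psi_{\lambda t} = \Psi_t \circ F$, so that $\widetilde{G}\widehat{F}^{k}h (F^k x) - \widetilde{G}\widehat{F}^{k+1}h(F^{k+1}x)$ equals $G_k h(x)$ up to the appropriate factors; this is exactly the mechanism by which $\widetilde G - \sum_k G_k$ becomes a formal inverse of $W$. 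Concretely, since $Kh = \widetilde{G}h - \lambda^{-1}(\widetilde{G}\widehat{F}h)\circ F^{-1}\circ$ (the $\varphi = \chi(\cdot/\lambda) - \chi(\cdot)$ splitting), summing $G_k$ telescopes to leave $\widetilde{G}h$ minus a tail term $\lambda^{-(n+1)}(\widetilde G \widehat F^{n+1} h)\circ F^{n+1}$ which one must show tends to zero uniformly.

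The key estimate is the uniform convergence. Here I would invoke Theorem~\ref{thm:SpecInfo} with $\eta < 1$: since $h \in \ker(P_j)$ for all $j\in\cJ$, the spectral decomposition \eqref{eq:DecomB} gives $\widehat{F}^k h = \widehat{F}^k P_0 h$, and part (2) of Theorem~\ref{thm:SpecInfo} yields $\norm{\widehat{F}^k h}_{\widetilde\cH} \leq \eta^k C_\eta \norm{h}_{\cH}$ with $\eta < 1$. Then Lemma~\ref{lem:InDual} (in the extended/dual sense for the space $\widetilde\cH$, applied to the stable curve which is a flow segment of bounded length, as observed after \eqref{eq:smoothergint}) bounds $\abs{(K\widehat{F}^k h)(y)}$ uniformly in $y$ by $C\norm{\widehat{F}^k h}_{\widetilde\cH} \leq C C_\eta \eta^k \norm{h}_{\cH}$. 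Since $\abs{G_k h(x)} = \abs{(K\widehat{F}^k h)(F^k x)}$ is bounded by the same geometric quantity uniformly in $x$, the series $\sum_k G_k h$ converges absolutely and uniformly, and the tail term from the telescoping likewise vanishes.

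It then remains to verify that $g = -\widetilde{G}h - \sum_k G_k h$ actually satisfies $g\circ\Psi_t(x) - g(x) = \int_0^t h\circ\Psi_r(x)\,dr$. For this I would compute $g\circ \Psi_t - g$ termwise. The term $-\widetilde G h$ contributes, via the fundamental theorem of calculus applied to $\widetilde{G}h(\Psi_t x) - \widetilde G h(x) = \int \chi(s)[h(\Psi_{s+t}x) - h(\Psi_s x)]\,ds$, an expression that after shifting the variable of integration telescopes against the ergodic integral and the $\chi$-boundary contributions; combining this with the corresponding computation for each $G_k$ term (using again the renormalization to convert $\Psi_t$ acting before $F^k$ into $\Psi_{\lambda^{-k}t}$) produces $\int_0^t h\circ\Psi_r(x)\,dr$ in the limit, with all error terms controlled by the same $\eta^k$ decay. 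One should note that the limit $g$ is continuous (uniform limit of continuous functions) and that the identity, established first for $t$ in a bounded range, extends to all $t\geq 0$ by the cocycle property.

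The main obstacle I expect is bookkeeping the telescoping carefully: one must track how the renormalization \eqref{eq:RenormFlow} interacts with the cutoff functions $\chi$ and $\varphi$ at each level $k$, ensuring that the "boundary" pieces at successive scales genuinely cancel and that the residual terms are exactly those controlled by Lemma~\ref{lem:InDual} together with the $\eta^k$ bound from Theorem~\ref{thm:SpecInfo}. A secondary technical point is justifying the use of Lemma~\ref{lem:InDual} for the weaker space $\widetilde\cH$ rather than $\cH^r_N$ directly — this requires the same summation-over-Fourier-modes argument as in the proof of Lemma~\ref{lem:InDual}, now with the $\abs{N}^{\kappa-1}$ weight, which is exactly why the two-scale setup $\cH \subset \widetilde\cH$ was introduced.
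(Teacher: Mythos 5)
Your proposal follows essentially the same route as the paper's proof: geometric decay of \(G_k h\) from Theorem~\ref{thm:SpecInfo}(2) with \(\eta<1\) combined with Lemma~\ref{lem:InDual}, the telescoping identity (which, since \((Kh)(x)=\widetilde G(\widehat F h)(Fx)-\widetilde G h(x)\), is exactly the paper's Lemma~\ref{lem:SumEq} expressing \(\int\chi(\lambda^{-n}t)\,h\circ\Psi_t(x)\,dt=\widetilde G h(x)+\sum_{k=0}^{n-1}G_k h(x)\)), and verification of the cohomological equation up to a residual ergodic integral supported at scale \(\lambda^{n}\) that is killed by renormalization and the same \(\eta^{n}\) bound. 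The only slips are bookkeeping ones you yourself flag: the telescoped tail is \(\widetilde G(\widehat F^{n}h)(F^{n}x)\) (no extra \(\lambda^{-(n+1)}\) factor, and with the opposite sign to the difference you wrote), and this is precisely the term the spectral estimate controls, so nothing essential changes.
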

\begin{rem}
 In a certain sense the formal inverse to differentiation by \(W\) is the operator defined as 
 \(G := - \widetilde{G}  -  \sum_{k=0}^{\infty} G_k\).
\end{rem}
\noindent
In order to prove Theorem \ref{thm:Cobound}, we first prove that the sum \( \sum_{k=0}^{\infty} G_k h(x)\) converges.
\begin{proof}[Proof of the first part of Theorem \ref{thm:Cobound}]
Since, by assumption, \(h \in \ker(P_j)\) for all \(j\in\cJ\), then the estimate of Lemma~\ref{lem:InDual}, together with Theorem \ref{thm:Deviation}, implies that
\[
\abs{ \left(G_kh\right)(x) }
= \abs{\left( K \widehat{F}^k h \right)(F^kx)}
  \leq C \norm{\smash{\hat{F}^{k}h}} \leq C \eta^{k} \norm{\smash{h}}.
\]
The sum is therefore bounded by a geometric sum and consequently converges. 
\end{proof}

\begin{lem}
\label{lem:SumEq}
For all \(n\in \bN\),  \(x\in M\) and \(h\in\cC^\infty(P)\),
 \[
  \int \chi(\lambda^{-n} t) \cdot h\circ \Psi_t(x) \ dt 
  =  \widetilde{G}h(x)  
  +  \sum_{k=0}^{n-1} G_k h(x).
 \]
\end{lem}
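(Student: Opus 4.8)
The plan is to prove the identity by induction on $n$, using the renormalization relation \eqref{eq:RenormFlow} to rewrite each telescoping increment. First I would establish the base case $n=0$: here the left-hand side is $\int \chi(t) \cdot h\circ\Psi_t(x)\,dt = \widetilde{G}h(x)$ by the definition of $\widetilde{G}$, and the sum $\sum_{k=0}^{-1}$ is empty, so the identity holds trivially.

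For the inductive step, suppose the identity holds for some $n\geq 0$. The key computation is to isolate the difference
\[
\int \chi(\lambda^{-(n+1)} t) \cdot h\circ \Psi_t(x)\,dt - \int \chi(\lambda^{-n} t) \cdot h\circ \Psi_t(x)\,dt = \int \bigl(\chi(\lambda^{-(n+1)}t) - \chi(\lambda^{-n}t)\bigr) \cdot h\circ\Psi_t(x)\,dt.
\]
Now $\chi(\lambda^{-(n+1)}t) - \chi(\lambda^{-n}t) = \varphi(\lambda^{-n}t)$ by the very definition $\varphi(t) = \chi(t/\lambda) - \chi(t)$. So this difference equals $\int \varphi(\lambda^{-n}t)\cdot h\circ\Psi_t(x)\,dt$. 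I would then change variables $t = \lambda^n u$ and apply the renormalization \eqref{eq:RenormFlow} (in the form $\Psi_{\lambda^n u} = F^{-n}\circ\Psi_u\circ F^n$, iterated from \eqref{eq:IntRenorm}) exactly as in the proof of Lemma~\ref{lem:RenormH}: this transforms the integral into $\int \varphi(u)\cdot (\lambda^n h\circ F^{-n})\circ\Psi_u(F^n x)\,du = \int\varphi(u)\cdot(\widehat{F}^n h)\circ\Psi_u(F^n x)\,du$. By the definition \eqref{eq:DefK} of $K$, this is precisely $(K\widehat{F}^n h)(F^n x) = G_n h(x)$. Hence the difference of the two $\chi$-integrals equals $G_n h(x)$, which is exactly the term needed to pass from $\sum_{k=0}^{n-1}$ to $\sum_{k=0}^{n}$, completing the induction.

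There is no real obstacle here; the lemma is essentially a bookkeeping identity. The only point requiring a little care is the change of variables combined with the iterated renormalization, i.e. verifying that $\int\varphi(\lambda^{-n}t)\cdot h\circ\Psi_t(x)\,dt = (K\widehat{F}^n h)(F^n x)$. This is the $\varphi$-weighted, $F^n$-translated analogue of Lemma~\ref{lem:RenormH} with $k=n$: writing $\psi(t):=\varphi(\lambda^{-n}t)$ so that $\psi(\lambda^n u)=\varphi(u)$, Lemma~\ref{lem:RenormH} gives $H_{x,\psi}(h) = H_{F^n x,\varphi}(\widehat{F}^n h)$, and the right side is $(K\widehat{F}^n h)(F^n x)$ by \eqref{eq:DefK} since $\varphi$ is exactly the bump used to define $K$. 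All integrals are over finite intervals with $h$ continuous, so everything is well-defined and the manipulations are justified without any convergence subtleties.
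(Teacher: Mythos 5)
Your proposal is correct and is essentially the paper's argument: the paper writes the same telescoping decomposition $\chi(\lambda^{-n}t)=\chi(t)+\sum_{k=0}^{n-1}\varphi(\lambda^{-k}t)$ and applies Lemma~\ref{lem:RenormH} to identify each term with $G_k h(x)$, while you merely repackage the telescoping as an induction on $n$. The key step — recognizing $\chi(\lambda^{-(k+1)}t)-\chi(\lambda^{-k}t)=\varphi(\lambda^{-k}t)$ and renormalizing via Lemma~\ref{lem:RenormH} to get $(K\widehat{F}^k h)(F^k x)$ — is identical in both.
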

\begin{proof}
Let   \(t_j:= \lambda^{j}t\) for \( j \in \bN\).
We observe that, since \(\chi(\lambda^{-n} t)    =  \chi(s) +  \sum_{k=0}^{n-1}  \left[\chi(\lambda^{-(k+1)} t) -\chi(\lambda^{-k} t)   \right]  \),
\[
\begin{aligned}
 \int \chi(\lambda^{-n} t) \cdot h\circ \Psi_t(x) \ dt 
&=
\sum_{j=0}^{n-1} \int \left[\chi(\lambda^{-(k+1)} t) -\chi(\lambda^{-k} t)   \right] h\circ \Psi_t(x) \ dt
\\
& \ \ \ + \int \chi(s) \cdot h \circ \Psi_{s}(x)\ ds
\\
&=  
\sum_{k=0}^{n-1} \int \varphi(\lambda^{-k} t)   \cdot  h\circ \Psi_t(x) \ dt
+ \widetilde{G}h(x).
\end{aligned}
\]
Furthermore, using the renormalization of Lemma~\ref{lem:RenormH}, 
\[
  \int \varphi(\lambda^{-k} t)   \cdot  h\circ \Psi_t(x) \ dt =  \int \varphi( t)   \cdot  (\widehat{F}^{k}h) \circ \Psi_t(\widehat{F}^{k}x) \ dt = G_k h(x). 
\]
\end{proof}

\noindent
Having established that the limit exists we will now complete the proof of Theorem \ref{thm:Cobound} by showing that this limit is, in fact, the object we are looking for.
\begin{proof}[Proof of second part of Theorem \ref{thm:Cobound}]
We show that \(g\) satisfies 
\(g \circ \Psi_W^t(x) - g(x) = \int_0^t h \circ \Psi_{W}^{s}(x) \> ds\).
For convenience let  \[
g_n(x)  :=  \widetilde{G}h(x)  
  +  \sum_{k=0}^{n-1} G_k h(x).
 \]
For \(t \in (0, \frac{1}{2})\) (by the semigroup property this suffices to prove the result for all \(t>0\)), Lemma~\ref{lem:SumEq} implies that
\begin{multline*}
 g_n(x)  - g_n(\Psi_t x)
  - \int_0^t h \circ \Psi_{W}^{s}(x)\>ds\\
 = \int_{\bR^+} \left [ \chi(\lambda^{-n}s)- \mathsf{1}_{[0,t]}-\chi(\lambda^{-n}(s-t))  \right] h \circ \Psi_{W}^{s}(x) \> ds = H_{x, \eta_n}(h)
\end{multline*}
where we define \(\eta_n := \chi(\lambda^{-n}s)- (\mathsf{1}_{[0,t]}(s) + \chi(\lambda^{-n}(s-t)))\). Note that \(\supp(\eta_n) \subset (\frac{\lambda^{n}}{2}, \lambda^{n} + t)\). Using the renormalization of the smooth ergodic integral (\ref{lem:RenormH}), scaling by a factor \(\lambda^n\), we can write
\[
H_{x, \eta_n}(h)
=
\int_{s=t}^\infty \left[ \chi(s) - \chi(s-t) \right] \cdot (\widehat{F}^n)\circ \Psi_s(F^nx)\ ds
\]
which, by the repeatedly used argument,  exponentially small in \(n\). 
\end{proof}

\section{Heisenberg Niflows}
\label{sec:Heisenberg}

We describe an important special case to which the results of the previous sections apply.
For \(E \in \bN\), a \(3\)-dimensional {Heisenberg nilmanifold} 
\(M = \Gamma_E \setminus N\), is a compact quotient  of the Heisenberg group \(N\) by a lattice \(\Gamma_E\),  
\[
N := \left \{
\left(\begin{smallmatrix} 
   1      & x & z \\
 0      & 1 & y \\
 0       & 0 & 1
\end{smallmatrix}\right) : x,y,z \in \mathbb{R} \right \},
\]
\[
\Gamma_E := \left \{
\left(\begin{smallmatrix} 
   1      & p & \frac{r}{E} \\
 0      & 1 & q \\
 0       & 0 & 1
\end{smallmatrix}\right) : p,q, r \in \bZ \right \}.
\]
It is common to express elements of $N$ in terms of the entries above the diagonal; in this form, the group operation is given by\footnote{Folland \cite{Folland89} refers to this as the symplectic Heisenberg group. It is also common to see the polarized Heisenberg group, given by the group law \((x,y,z) * (a,b,c) = (x+a, y+b, z+c+xb)\), which corresponds directly to matrix multiplication. The two groups are equivalent; the map \((x,y,z) \to (x,y,z + \frac{1}{2}xy) \) gives an isomorphism between them.}
\[
(x,y,z) * (a,b,c) = \left(x + a, y + b, z + c + \frac{1}{2}(xb-ya)\right)
\]
where \(xb-ya=[(x,y),(a,b)]\) is the symplectic form on \(\bR ^2\). Since \(M\) is a nontrivial circle bundle over \(\bT^2\), we are precisely in the setting of Faure \& Tsujii~\cite{FT15} as described in Section \ref{sec:Spectrum}.
The standard basis of the Lie Alegbra of \(N\) is given by the elements
 \[  \left\{ X_{0}=
\left( \begin{smallmatrix}
  0 & 1 & 0 \\ 0 & 0 & 0 \\ 0 & 0 & 0\\
\end{smallmatrix}\right), \>
Y_{0}=\left(\begin{smallmatrix}
  0 & 0& 0 \\ 0 & 0 & 1\\ 0 & 0 & 0 \\
\end{smallmatrix}\right), \>
Z_{0}=\left(\begin{smallmatrix}
0 & 0 & 1 \\ 0 & 0 & 0 \\ 0 & 0 & 0\\
\end{smallmatrix} \right)\right\}
\] 
which satisfy the commutation relations
\begin{equation} \label{eq:LieBracket}
[X_{0},Y_{0}]=Z_{0} \> \> \textnormal{and} \> \>  [X_{0},Z_{0}]=[Y_{0},Z_{0}]=0.
\end{equation}
The natural probability measure, \(\mu\), on \(M\) is inherited from the Haar measure on \(N\).
Nilflows on \(M\) are given by the right action of one-parameter subgroups of \(N\), and thus, preserve \(\mu\). When the projected flow is an irrational linear flow on \(\bT ^2\), the nilflow is uniquely ergodic and minimal (see, e.g., \cite{FF06}). 
For \((x,y,z)\in M\), consider a partially hyperbolic automorphism of \(N\) given by 
\begin{equation}
\label{eq:phd}
F(x,y,z) = (A(x,y), z)
\end{equation}
where
\[
A = \begin{pmatrix} a & b \\ c & d 
\end{pmatrix} 
\]
is a \(2 \times 2\) matrix with integer entries, determinant 1, and eigenvalues \(\lambda, \lambda^{-1}\) for some \(\lambda^{-1} \in (0,1)\). 
\(F\) induces a diffeomorphism of \(M = \Gamma\setminus N\) when \(F(\Gamma)=\Gamma.\) Note that \(F\) preserves the symplectic form, and thus, can be considered in the framework of Faure \cite{Faure07} and Faure \& Tsujii~\cite{FT15} from Section \ref{sec:Spectrum}. 
We describe the relevant parabolic system that is renormalized by the automorphism \eqref{eq:phd} by constructing a flow in the stable direction. Consider an element of the Lie Alegbra, \(W = \alpha X_0 + \beta Y_0\), where \(\alpha,\beta\) are the non-zero entries of the normalised eigenvector associated to \(\lambda\).  We consider the flow generated by \(W,\) i.e., the right action of the subgroup of \(N\) given by\footnote{We can compute explicitly the formula \(\exp{tW} = (\alpha t, \beta t, \frac{1}{2} \alpha \beta t^2)\).}
\begin{equation}
\label{eq:Nilflow}
{\{ \Psi_{t}^{W} \}}_{t \in \bR} := \{\exp({tW})\}_{t \in \bR}.
\end{equation}
A simple calculation gives the following renormalization with the partially hyperbolic automorphism,
\begin{equation}
\label{eq:renormnilflow}
F \circ \Psi_{\lambda t}^W = \Psi_{t}^W \circ F.
\end{equation}
Combining Theorem \eqref{thm:SpecInfo} with the renormalization \eqref{eq:renormnilflow}, we can estimate the growth of ergodic integrals, in this case, of the form
\[
\int_{0}^{t} h \circ \Psi_r^W(x) \> dx
\]
by applying Theorem \ref{thm:Deviation}. We also get existence of a coboundary from Theorem \ref{thm:Cobound}. These results have already been obtained in Flaminio \& Forni \cite{FF06}.



\begin{thebibliography}{10}

\bibitem{Adam18}
A.~Adam.
\newblock
Horocycle averages on closed manifolds and transfer operators. (2018)
\newblock
\emph{arXiv:}1809.04062.

\bibitem{Baladi17}
V.~Baladi.
\newblock
The quest for the ultimate anisotropic Banach space.
\newblock
\emph{J.\,Stat.\, Phys.}  166 (2017)  525--557.
Corrections and complements. 170 (2018) 1242--1247.

\bibitem{Butterley16}
O.~Butterley.
\newblock
A Note on Operator Semigroups Associated to Chaotic Flows. 
\newblock
\emph{Ergod.\,Th.\,\& Dynam.\,Sys.} 36 (2016) 1396--1408 (Corrigendum: 36 1409--1410). 

\bibitem{Faure07}
F.~Faure.
\newblock
Prequantum chaos: resonances of the prequantum cat map. 
\newblock
\emph{J.\,Mod.\,Dyn.} 1 (2007) 255--285.

\bibitem{FGL18}
F.~Faure, S.~Gou\"ezel \& E.~Lanneau.
\newblock
Ruelle spectrum of linear pseudo-Anosov maps. (2018)
\newblock
\emph{arXiv:}1808.00202.

\bibitem{FT13}
F.~Faure \& M.~Tsujii.
\newblock
Band structure of the Ruelle spectrum of contact Anosov flows.
\newblock
\emph{C. P. Math. Acad. Sci. Paris} 351 (9-10) (2013) 385--391.

\bibitem{FT15}
F.~Faure \& M.~Tsujii.
\newblock
Prequantum transfer operator for symplectic Anosov diffeomorphism.
\newblock
\emph{Ast\'erisque} v.375 Soci\'et\'e Math\'ematique de France (2015).

\bibitem{FT17}
F.~Faure \& M.~Tsujii.
\newblock
The semiclassical zeta function for geodesic flows on negatively curved manifolds.
\newblock
\emph{Invent. math.} 208 (2017) 851--998.

\bibitem{FF03}
L.~Flaminio, G.~Forni.
\newblock
Invariant distributions and time averages for horocycle flows.
\newblock
{\em Duke Math.\,J.} 119 (2003)  465--526.

\bibitem{FF06}
L.~Flaminio, G.~Forni.
\newblock
Equidistribution of nilflows and applications to theta sums.
\newblock
{\em Erg.\,Th.\,\& Dynam.\,Sys.} 26 (2006)  409--433.
%
 \bibitem{Folland89}
 G.B.~Folland.
 \newblock
 Harmonic analysis in phase space.
 \newblock
\emph{Annals of Mathematics Studies} 1989, Princeton University Press.
%
\bibitem{Forni97}
G.~Forni.
\newblock
Solutions of the cohomological equation for area-preserving flows on compact surfaces of higher genus. 
\newblock
{\em Ann.\,of Math.} 146 (1997) 295--344.

\bibitem{Forni02a}
G.~Forni.
\newblock
Asymptotic behavior of ergodic intergrals of renormalizable parabolic flows.
\newblock
{\em Proceedings of the ICM}  v.3,  Higher Education Press. (2002) 317--326.
 
\bibitem{Forni02b}
G.~Forni.
\newblock 
Deviation of ergodic averages for area-preserving flows on surfaces of higher genus. 
\newblock 
{\em Ann.\,of Math.}  155 (2002) 1--103.

\bibitem{Franks70}
\newblock J.~Franks.
\newblock  Anosov diffeomorphisms.
\newblock{\em Proc.\,of Symposia in Pure Math.}, (1970) 61--93.

\bibitem{GL16}
P.~Giulietti \& C.~Liverani.
\newblock
Parabolic dynamics and anisotropic Banach spaces.
\newblock
To appear in \emph{J.\,Eur.\,Math.\,Soc.}
\newblock
{\em  arXiv:1412.7181v2}
%
 \bibitem{GL08}
 S.~Gou\"ezel \& C.~Liverani.
 \newblock
 Compact locally maximal hyperbolic sets for smooth maps: fine statistical properties.
 \newblock
 \emph{J. Diff. Geometry} 79  (2008) 433--477.
 \bibitem{HW99}
 B.~Hasselblatt \& A.~Wilkinson.
 \newblock Prevalence of non-{L}ipschitz {A}nosov foliations.
 \newblock {\em Ergod. Th. \& Dynam. Sys.}, 19 (1999) 643--656.
 
\bibitem{HK90}
S.~Hurder \& A.~Katok. 
\newblock
Differentiability, rigidity and Godbillon-Vey classes for Anosov flows. 
\newblock
\emph{Pub.\,Math.\,de l'IH\'ES} 72 (1990)  5--61.

\bibitem{MMY05}
S. ~Marmi, P. ~Moussa, J. C. ~Yoccoz.
\newblock
The cohomological equation for Roth type interval exchange maps.
\newblock
{\em J.\,Am.\,Math.\,Soc.} 18 (2005) 823--872.

\bibitem{Newhouse70}
\newblock S.~Newhouse.
\newblock  On codimension-one Anosov diffeomorphisms.
\newblock{\em Amer.\,J.\,of Math. } 92 (1970) 761--770.

\end{thebibliography}
\end{document}